\title{The Axiomatization of Affine Oriented Matroids Reassessed}
\date{7 Jul 2016} 
\author{Andrea Baum and Yida Zhu}
\theoremstyle{definition}
\newtheorem{Def}{Definition}[section]
\newtheorem{Bem}{Remark}[section]
\newtheorem{Bsp}{Example}[section]
\theoremstyle{plain}
\newtheorem{Kor}{Corollary}[section]
\newtheorem{Lem}{Lemma}[section]
\newtheorem{Prop}{Proposition}[section]
\newtheorem{Thm}{Theorem}[section]
\DeclareMathOperator{\sym}{Sym}
\DeclareMathOperator{\asym}{Asym}
\begin{document}
\maketitle

\begin{abstract}
In an unpublished manuscript of 1992, Johan Karlander has given an axiomatization of affine oriented matroids, which can be thought of as oriented matroids with a hyperplane at infinity. A closer examination of the text revealed an invalid construction and an incorrect argument in the proof of his main theorem.
This paper provides an alternative argument to fix and slightly simplify the proof of the main theorem.
\end{abstract}

\section{Basic Notions and Motivation}
Oriented matroids can be thought of as combinatorial abstractions of real hyperplane arrangements,
which arise as fundamental objects in various mathematical theories: they arise from
inequality systems in linear programming, from facets of convex polytopes and so on. Real
hyperplane arrangements have also been studied in discrete geometry with respect to their combinatorial
structure, that is, how they partition space.
\vspace{0.5cm}
\\
\emph{\textbf{Arrangements of Hyperplanes}}\vspace{0.5cm}

A finite family $\mathcal{H} = \{H_e:e\in E\}$ of affine hyperplanes in $\mathbf{R}^{d}$ is called an \emph{arrangement of hyperplanes}. For the sake of simplicity, we always make the \emph{regularity assumption}: there is a subset of $\mathcal{H}$ whose intersection is a single point. However, as we will see in the next section, any arrangement of hyperplanes can be considered as the intersection of a central arrangement (of which the intersection is a single point) with another hyperplane.

Associated with each $H_e$ in the arrangement, there are two open halfspaces bounded by $H_e$, which we call the \emph{positive side} (\emph{plus side}) and \emph{negative side} (\emph{minus side}) of $H_e$, denoted by ${H_e}^{+}$ and ${H_e}^{-}$.
It does not matter which side is the plus side, it is only important that the assignment of $+$ and $-$ is fixed.
Then for every vector $x \in \mathbf{R}^{d}$ we can define a \emph{sign vector} $\sigma(x) = (\sigma_1(x),\sigma_2(x),...,\sigma_n(x))$ where $\sigma_e(x) = +$ if $x \in {H_e}^{+}$, $\sigma_e(x) = -$ if $x \in {H_e}^{-}$ and $\sigma_e(x) = 0$ if $x \in H_e$. This sign vector encodes the position of $x$ with respect to each hyperplane.

Figure 1 illustrates an arrangement of five hyperplanes in $\mathbf{R}^{2}$ which satisfies the regularity assumption.
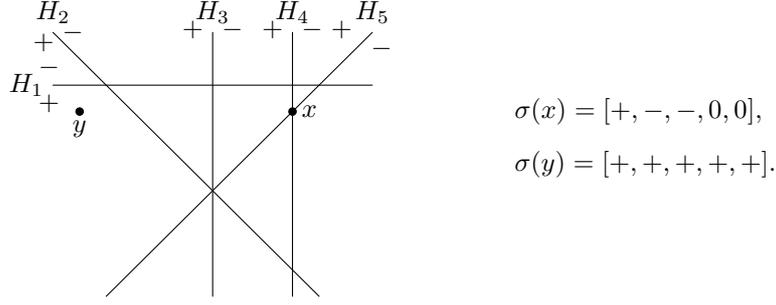
\begin{figure}
\centering
\begin{tikzpicture}[scale=0.7,cap=round,>=latex]
  \coordinate (center) at (0,0);
  \draw (0,-3) -- (0,2);
  \draw (0,2) node[anchor=south]{$H_3$};
  \draw (0,1.7) node[anchor=south east]{$+$};
  \draw (0,1.7) node[anchor=south west]{$-$};
  \draw (1.5,-3) -- (1.5,2);
  \draw (1.5,2) node[anchor=south]{$H_4$};
  \draw (1.5,1.7) node[anchor=south east]{$+$};
  \draw (1.5,1.7) node[anchor=south west]{$-$};
  \draw (-3,1) -- (3,1);
  \draw (-3,1) node[anchor=east]{$H_1$};
  \draw (-2.7,1) node[anchor=south east]{$-$};
  \draw (-2.7,1) node[anchor=north east]{$+$};
  \draw (-3,2) -- (2,-3);
  \draw (-3,2) node[anchor=south]{$H_2$};
  \draw (-2.8,1.8) node[anchor=east]{$+$};
  \draw (-3,2) node[anchor=west]{$-$};
  \draw (3,2) -- (-2,-3);
  \draw (3,2) node[anchor=south]{$H_5$};
  \draw (2.8,1.7) node[anchor=south east]{$+$};
  \draw (2.8,1.7) node[anchor=west]{$-$};

  \filldraw [black] (1.5,0.5) circle (2pt);
  \draw (1.5,0.5) node[anchor=west]{$x$};
  \filldraw [black] (-2.5,0.5) circle (2pt);
  \draw (-2.5,0.5) node[anchor=north]{$y$};

  \draw (5.5,0.5) node[anchor=west]{$\sigma(x) = [+,-,-,0,0],$};
  \draw (5.5,-0.5) node[anchor=west]{$\sigma(y) = [+,+,+,+,+].$};
\end{tikzpicture}
\caption{An arrangement of five hyperplanes.}
\end{figure}

The set of all points $x \in \mathbf{R}^{d}$ having the same sign vector $X =\sigma(x)$ forms a \emph{cell} in the decomposition of $\mathbf{R}^{d}$ induced by $\mathcal{H}$. Let us begin with some basic notions.

\begin{Def}\label{notion}
Let $E$ be a finite set. A \emph{signed subset} (or \emph{sign vector}) $X$ of $E$ is a member of $\{+,-,0\}^E$. We call $E$ the \emph{ground set} of $X$. Every signed subset $X$ can be identified with an ordered pair $(X^{+},X^{-})$ with $X^{+} = \{e \in E : X_e = +\}$, and  $X^{-} = \{e \in E : X_e = -\}$. Let $X,Y$ be signed subsets of $E$, and let $A \subseteq E$ and $\mathcal{W},\mathcal{W'} \subseteq \{+,-,0\}^{E}$. Then\vspace{0.2cm}\\
$\bullet$  $\underline{X}=X^{+} \cup X^{-}$ is the \emph{support} of X and $X^{0} = E - \underline{X}$ is the \emph{zero set} of $X$. If $\mathcal{W}$ has the property that every member of  $\mathcal{W}$ has the same support, then the support of $\mathcal{W}$, denoted by $\underline{\mathcal{W}}$, is this common support. The zero set of $\mathcal{W}$ is defined in the same way.\vspace{0.2cm}\\
$\bullet$	The \emph{composition} $X \circ Y$ of X,Y is defined by:\vspace{0.2cm}

    $\text{ }\text{ }\text{ } (X \circ Y)_e = \begin{cases}
    X_e, & if X_e \neq 0, 	\\
    Y_e, & otherwise.
    \end{cases}$\vspace{0.2cm}\\
The composition of $\mathcal{W},\mathcal{W'}$, denoted by $\mathcal{W} \circ \mathcal{W'}$, is the set of compositions of members from $\mathcal{W}$ and $\mathcal{W'}$. If $\mathcal{W} = \{X\}$, we write $X \circ \mathcal{W'}$ instead of $\{X\} \circ \mathcal{W'}$.\vspace{0.2cm}\\
$\bullet$  The \emph{opposite} $-X$ is the signed subset $(X^{-},X^{+})$ and the function mapping $X$ to $-X$ is called \emph{sign reversal}. $\mathcal{W}$ is symmetric if it contains all opposites of its members, i.e. $\mathcal{W} = -\mathcal{W} = \{-X:X\in \mathcal{W}\}$.
The \emph{restriction} of $X$ to $A$, denoted by $X(A)$ or $X|_A$, is the signed subset $(X^{+} \cap A,X^{-} \cap A)$. $\mathcal{W}(A)$ is the restriction of $\mathcal{W}$ to $A$ defined by $\mathcal{W}(A) = \{W(A) : W\in \mathcal{W}\}$. The \emph{reorientation} ${}_{-A}\!X$ of $X$ on A is the signed subset $X(E-A) \circ (-X)$. \vspace{0.2cm}\\
$\bullet$	The \emph{sign order} $\preceq$ is a partial order on $\{+,-,0\}^{E}$, defined by $X \preceq Y$ if and only if $X^{+} \subseteq Y^{+}$ and $X^{-} \subseteq Y^{-}$. In this case we say that $Y$ \emph{conforms} to $X$. The set of members of $\mathcal{W}$ which conform to $X$ is denoted by $\mathcal{W}_{X}$.\vspace{0.2cm}\\
$\bullet$	The \emph{separation set} $S(X,Y)$ of $X,Y$ is defined by:
\[S(X,Y)= (X^{+}\cap Y^{-}) \cup (X^{-} \cap Y^{+}).\]
\end{Def}

\begin{Bsp}
In the context of Figure 1, we have $E = \{1,2,3,4,5\}$. The support of $X$ is $\{1,2,3\}$. Geometrically, the composition $X \circ Y$ corresponds to the cell next to $X$ and lying on the segment between $X$ and $Y$, as shown in Figure 2. The separation set $S(X,Y) = \{2,3\}$ contains the indices of those hyperplanes "separating" $X$ and $Y$, meaning that $X$ and $Y$ belong to different sides of those hyperplanes.
\end{Bsp}
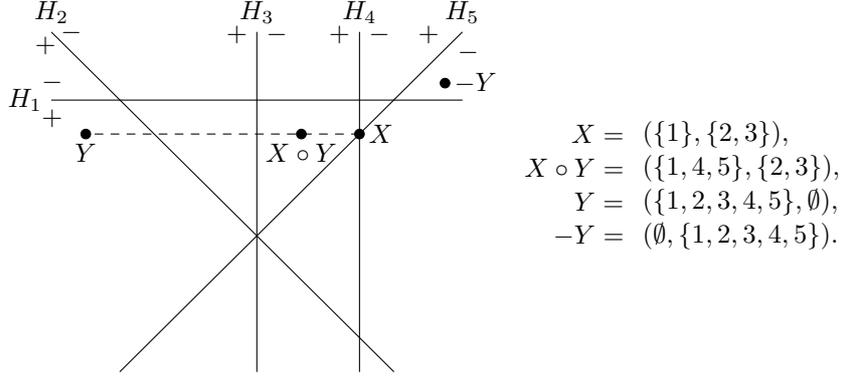
\begin{figure}[h]
\centering
\begin{tikzpicture}[scale=0.9,cap=round,>=latex]
  \coordinate (center) at (0,0);
  \draw (0,-3) -- (0,2);
  \draw (0,2) node[anchor=south]{$H_3$};
  \draw (0,1.7) node[anchor=south east]{$+$};
  \draw (0,1.7) node[anchor=south west]{$-$};
  \draw (1.5,-3) -- (1.5,2);
  \draw (1.5,2) node[anchor=south]{$H_4$};
  \draw (1.5,1.7) node[anchor=south east]{$+$};
  \draw (1.5,1.7) node[anchor=south west]{$-$};
  \draw (-3,1) -- (3,1);
  \draw (-3,1) node[anchor=east]{$H_1$};
  \draw (-2.7,1) node[anchor=south east]{$-$};
  \draw (-2.7,1) node[anchor=north east]{$+$};
  \draw (-3,2) -- (2,-3);
  \draw (-3,2) node[anchor=south]{$H_2$};
  \draw (-2.8,1.8) node[anchor=east]{$+$};
  \draw (-3,2) node[anchor=west]{$-$};
  \draw (3,2) -- (-2,-3);
  \draw (3,2) node[anchor=south]{$H_5$};
  \draw (2.8,1.7) node[anchor=south east]{$+$};
  \draw (2.8,1.7) node[anchor=west]{$-$};

  \filldraw [black] (1.5,0.5) circle (2pt);
  \draw (1.5,0.5) node[anchor=west]{$X$};
  \filldraw [black] (-2.5,0.5) circle (2pt);
  \draw (-2.5,0.5) node[anchor=north]{$Y$};
  \filldraw [black] (2.75,1.25) circle (2pt);
  \draw (2.75,1.25) node[anchor=west]{$-Y$};
  \filldraw [black] (0.65,0.5) circle (2pt);
  \draw (0.65,0.5) node[anchor=north]{$X \circ Y$};

  \draw[dashed] (1.5,0.5) -- (-2.5,0.5);

  \draw (5.5,0.5) node[anchor=west]{$(\{1\},\{2,3\}),$};
  \draw (5.5,0.5) node[anchor=east]{$X = $};
  \draw (5.5,0) node[anchor=west]{$(\{1,4,5\},\{2,3\}),$};
  \draw (5.5,0) node[anchor=east]{$X \circ Y =$};
  \draw (5.5,-0.5) node[anchor=west]{$(\{1,2,3,4,5\},\emptyset),$};
  \draw (5.5,-0.5) node[anchor=east]{$Y = $};
  \draw (5.5,-1) node[anchor=west]{$(\emptyset,\{1,2,3,4,5\}).$};
  \draw (5.5,-1) node[anchor=east]{$-Y =$};  
\end{tikzpicture}

\caption{Geometrical meaning of composition and sign reversion.}
\end{figure}

\begin{Bem}
It is easy to see that the composition is associative, and the support of the composition from signed subsets $X$ and $Y$ is equal to the union of supports of $X$ and $Y$. 
\end{Bem}
\text{ }
\vspace{0.5cm}\\
\emph{\textbf{Central Arrangement of Hyperplanes}}\vspace{0.5cm}

An arrangement of hyperplanes $\mathcal{H} = \{H_e:e\in E\}$ is \emph{central} if every hyperplane $H_e$ contains the origin $\mathbf{0}$. With the regularity assumption, this is equivalent to $\bigcap_{e \in E}{H_e} = \{\mathbf{0}\}$.

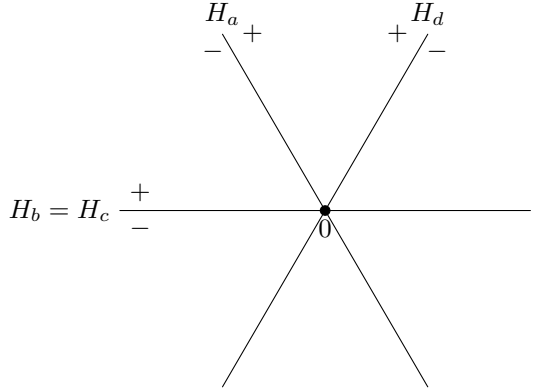
\begin{figure}[h]
\centering
\begin{tikzpicture}[scale=0.9,cap=round,>=latex]
  \coordinate (center) at (0,0);
  \draw (60:3) -- (60:-3);
  \draw (60:3) node[anchor=south]{$H_d$};
  \draw (60:2.7) node[anchor=south east]{$+$};
  \draw (60:2.7) node[anchor=west]{$-$};
  \draw (-60:3) -- (-60:-3);
  \draw (-60:-3) node[anchor=south]{$H_a$};
  \draw (-60:-2.7) node[anchor=east]{$-$};
  \draw (-60:-2.7) node[anchor=south west]{$+$};
  \draw (-3,0) -- (3,0);
  \draw (-3,0) node[anchor=east]{$H_b = H_c$};
  \draw (-2.7,0) node[anchor=south]{$+$};
  \draw (-2.7,0) node[anchor=north]{$-$};

  \filldraw [black] (0,0) circle (2pt);
  \draw (0,0) node[anchor=north]{$0$};
\end{tikzpicture}
\caption{A central arrangement with four hyperplanes.}
\end{figure}

Oriented matroids can be thought of as an abstraction of cell decompositions of such central arrangements:\vspace{0.5cm}
\begin{Def}\label{oriented matroid}
Let $E$ be a finite set. A set $\mathcal{O} \subseteq \{+,-,0\}^E$ is the set of covectors of an \emph{oriented matroid} if and only if $\mathcal{O}$ satisfies the following axioms:
\begin{align*}
(O1)\text{ } &(\emptyset,\emptyset) \in \mathcal{O},\\
(O2)\text{ } &\text{if } X \in \mathcal{O} \text{ then } -X \in \mathcal{O},\\
(O3)\text{ } &\text{if } X,Y \in \mathcal{O} \text{ then } X \circ Y \in \mathcal{O},\\
(O4)\text{ } &\text{if } X,Y \in \mathcal{O} \text{ with } \underline{X} = \underline{Y} \text{ and } e \in S(X,Y) \text{ then there exists } Z \in \mathcal{O} \text{ such that }\\
&Z_e=0 \text{ and } Z_f = {(X \circ Y)}_f = {(Y \circ X)}_f \text{ for all } f \notin S(X,Y).
\end{align*}
The members of $\mathcal{O}$ with maximal support are called the \emph{topes} of $\mathcal{O}$, and the subset $\mathcal{T(O)}$ of $\mathcal{O}$ containing all topes is the \emph{tope set} of $\mathcal{O}$.
\end{Def}
\vspace{0.2cm}

There are also alternative axioms for oriented matroids employing vectors, circuits and so on, see \cite[chapter 3]{bjoerner}. We focus here on the covector axioms. Instead of saying "$\mathcal{O}$ is the collection of covectors of an oriented matroid" we will often simply write "$\mathcal{O}$ is an oriented matroid".
\begin{Kor}\label{tope EqS}
If $\mathcal{O}$ is an oriented matroid on E, then $\mathcal{T(O)}$ has the property that all of its member have the same support.
\end{Kor}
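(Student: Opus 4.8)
The plan is to derive the statement directly from axiom (O3) together with the observation, recorded in the Remark, that the support of a composition is the union of the supports. Only these two facts are needed; (O1), (O2) and (O4) play no role.

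First I would fix two topes $X,Y \in \mathcal{T(O)}$ and aim to show $\underline{X} = \underline{Y}$. By (O3) the composition $X \circ Y$ again lies in $\mathcal{O}$, and by the Remark its support equals $\underline{X} \cup \underline{Y}$, which contains $\underline{X}$. Since $X$ is a tope, i.e.\ its support is maximal (with respect to inclusion) among the supports of members of $\mathcal{O}$, the inclusion $\underline{X} \subseteq \underline{X \circ Y}$ cannot be strict, so $\underline{X} = \underline{X} \cup \underline{Y}$ and hence $\underline{Y} \subseteq \underline{X}$.

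Next I would run the symmetric argument: $Y \circ X \in \mathcal{O}$ by (O3), its support is $\underline{Y} \cup \underline{X} \supseteq \underline{Y}$, and maximality of $\underline{Y}$ forces $\underline{X} \subseteq \underline{Y}$. Combining the two inclusions yields $\underline{X} = \underline{Y}$, and since $X,Y$ were arbitrary topes this proves the claim.

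I do not expect any genuine obstacle here; the only point requiring a little care is to read "maximal support" correctly as maximality under set inclusion (not as a unique maximum), so that the argument above is what actually establishes that the maximum is in fact attained uniquely as a support.
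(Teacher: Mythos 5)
Your proof is correct and essentially the same as the paper's: both rely solely on axiom $(O3)$, Remark 1.1 that $\underline{X \circ Y} = \underline{X} \cup \underline{Y}$, and maximality of tope supports. The only cosmetic difference is that you derive the two inclusions $\underline{Y} \subseteq \underline{X}$ and $\underline{X} \subseteq \underline{Y}$ directly, while the paper phrases it as a single proof by contradiction.
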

\begin{proof}
By $(O3)$ we have $T \circ T' \in \mathcal{O}$ for all $T,T' \in \mathcal{T(O)} \subseteq \mathcal{O}$. By Remark 1.1, the support of $T \circ T'$ is the union of supports of $T$ and $T'$. Suppose for a contradiction that $\underline{T} \neq \underline{T'}$, then the member $T \circ T'$ of $\mathcal{O}$ witnesses that the supports of $T$ and $T'$ are not maximal, contradicting the definition of a tope.
\end{proof}

From the set of topes $\mathcal{T(O)}$ one can retrieve the entire oriented matroid $\mathcal{O}$. This was first observed by A. Mandel, see \cite[4.2.13]{bjoerner}:

\begin{Thm}[\textbf{Mandel}]
Let $\mathcal{O}$ be an oriented matroid on $E$. Then $\mathcal{O}$ is determined by its tope set $\mathcal{T(O)}$ via
\[\mathcal{O} =\{V \in \{+,-,0\}^{E} : V \circ \mathcal{T} \subseteq \mathcal{T} \}.\]
\end{Thm}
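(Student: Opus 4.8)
The plan is to prove the two inclusions $\mathcal{O}\subseteq\mathcal{O}'$ and $\mathcal{O}'\subseteq\mathcal{O}$ separately, where $\mathcal{O}':=\{V\in\{+,-,0\}^{E}: V\circ\mathcal{T}\subseteq\mathcal{T}\}$ denotes the right-hand side. The first inclusion is routine; the second carries all the content.

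For $\mathcal{O}\subseteq\mathcal{O}'$ I would argue as follows. Let $V\in\mathcal{O}$ and $T\in\mathcal{T}$. By $(O3)$ we have $V\circ T\in\mathcal{O}$, and by Remark 1.1 its support is $\underline{V}\cup\underline{T}\supseteq\underline{T}$. Since $T$ has maximal support among the members of $\mathcal{O}$, maximality forces $\underline{V\circ T}=\underline{T}$; hence $V\circ T$ again has maximal support, so $V\circ T\in\mathcal{T}$. Thus $V\in\mathcal{O}'$. (In passing this also shows $\underline{V}\subseteq\underline{T}$ for every $V\in\mathcal{O}$ and $T\in\mathcal{T}$.)

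For $\mathcal{O}'\subseteq\mathcal{O}$ I would first record some cheap facts about $\mathcal{O}'$: every $V\in\mathcal{O}'$ satisfies $\underline{V}\subseteq\underline{\mathcal{T}}$ (otherwise $V\circ T$ would have support strictly larger than $\underline{\mathcal{T}}$), one has $\mathcal{T}\subseteq\mathcal{O}'$ (by $(O3)$ and Corollary~\ref{tope EqS}, a composition of topes is a tope), and $\mathcal{O}'$ is symmetric (using $(-V)\circ T=-(V\circ(-T))$ and the symmetry of $\mathcal{T}$ inherited from $(O2)$). The key reduction is the following lemma: \emph{for every $V\in\mathcal{O}'$ the set $\mathcal{T}_{V}$ of topes conforming to $V$ is non-empty, equals $\{V\circ T: T\in\mathcal{T}\}$, and satisfies $V=\bigwedge\mathcal{T}_{V}$}, where $\bigwedge\mathcal{T}_{V}$ is the coordinatewise meet (its $e$-th entry is the common value of the $e$-th entries of the members of $\mathcal{T}_{V}$ if that value is constant, and $0$ otherwise). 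Indeed $V\circ T$ conforms to $V$ and lies in $\mathcal{T}$ for every tope $T$, while conversely $V\circ P=P$ whenever $V\preceq P$; and $\mathcal{T}\neq\emptyset$ because $\mathcal{O}$ is finite and non-empty, so $\mathcal{T}_{V}\neq\emptyset$. For the meet: if some $e\notin\underline{V}$ had a constant sign on all of $\mathcal{T}_{V}$, then, since $(V\circ T)_{e}=T_{e}$ and $V\circ T\in\mathcal{T}_{V}$ for \emph{every} tope $T$, that sign would be constant on all of $\mathcal{T}$, contradicting $-T\in\mathcal{T}$ whenever $T\in\mathcal{T}$.

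It then remains to show that $V=\bigwedge\mathcal{T}_{V}$ actually lies in $\mathcal{O}$. The natural attempt is a downward induction on $|\underline{V}|$: when $\underline{V}=\underline{\mathcal{T}}$ one has $V=V\circ T\in\mathcal{T}\subseteq\mathcal{O}$; for the inductive step one would pick $e\in\underline{\mathcal{T}}\setminus\underline{V}$ and try to manufacture two members $V',V''$ of $\mathcal{O}'$ of strictly larger support (the obvious candidates being $V$ with $e$-entry set to $+$, resp.\ $-$), so that the induction hypothesis places $V',V''$ in $\mathcal{O}$, after which one application of $(O4)$ to $V',V''$ at $e$ --- note $S(V',V'')=\{e\}$ and $V'\circ V''$ agrees with $V$ off $e$ --- reproduces $V$ exactly. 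The main obstacle is that $(O4)$ zeroes out only one coordinate at a time and that the compositions occurring when one iterates it tend to re-activate coordinates one has just zeroed, so the naive candidates $V',V''$ need not lie in $\mathcal{O}'$ and this induction does not go through as stated; one is forced into a more delicate, conformal elimination argument that builds covectors of every support between $\underline{V}$ and $\underline{\mathcal{T}}$ directly out of the topes in $\mathcal{T}_{V}$, with careful bookkeeping of which coordinate is eliminated against which covector. (That some care is unavoidable is already visible from the fact that the all-zero covector is forced into $\mathcal{O}$ by $(O1)$ rather than by any elimination, and that an oriented matroid need not possess covectors of every small support.) This last step is precisely the substance of Mandel's theorem, which is why the statement is attributed to Mandel and referred to \cite[4.2.13]{bjoerner}; for the details I would either reproduce that argument or simply cite it.
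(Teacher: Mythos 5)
The paper does not prove this theorem: it simply attributes the result to Mandel and cites \cite[4.2.13]{bjoerner}, so there is no ``paper's proof'' to compare against. Your partial argument is correct as far as it goes: the inclusion $\mathcal{O}\subseteq\mathcal{O}'$, the facts that $\underline{V}\subseteq\underline{\mathcal{T}}$ for $V\in\mathcal{O}'$, that $\mathcal{T}\subseteq\mathcal{O}'$, that $\mathcal{O}'$ is symmetric, and the meet characterization $V=\bigwedge\mathcal{T}_V$ with $\mathcal{T}_V=\{V\circ T : T\in\mathcal{T}\}$ are all right.

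The genuine gap, which you yourself flag, is the decisive inclusion $\mathcal{O}'\subseteq\mathcal{O}$. Your diagnosis of why the naive downward induction on $|\underline{V}|$ fails is correct: for $e\notin\underline{V}$ the candidate $V'$ with $V'_e=+$ and $V'_f=V_f$ elsewhere need not lie in $\mathcal{O}'$, because $V'\circ T$ differs from the tope $V\circ T$ only at coordinate $e$, and a reorientation of a tope at a single coordinate is in general not a tope; hence the induction hypothesis cannot be applied to $V'$. This is exactly where the nontrivial content of Mandel's theorem sits --- the reference establishes it via structural properties of the big face lattice and conformal decomposition of covectors, not by a short application of $(O4)$. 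Deferring to \cite[4.2.13]{bjoerner} is what the paper does too, so your proposal is, in effect, aligned with the paper's treatment, but as a self-contained proof it is incomplete, and you honestly say so.
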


Using this theorem, an axiomatic treatment of affine oriented matroids has been attempted by J.Karlander in his PhD thesis \cite{karlander}. His manuscript was cited either as a preprint (KTH Stockholm) from \cite{bjoerner}, \cite{ck2} or as "to appear in the Eur. J. Combinatorics" for some time in the nineties, e.g. in \cite{ck1} and \cite{ck3}. A document from May 1995 (Combinatorics at ETH) still cites it in this way \cite{KTH}. The paper announced, however, never saw the light of publication, presumably because the final version was either retracted or never delivered.

This is a particularly unfortunate situation since the Karlander thesis contains much valuable material in regard to affine oriented matroids. It is the aim of this paper to revive the interest in the subject and to show that the main theorem of Karlander holds true, despite the fact that there is a fatal flaw in his proof.

\section{Getting started (following Karlander)}
Let $\mathcal{O}$ be an oriented matroid on $E$. We fix some $g \in E$, and let $g^{+}$ be the signed subset $(\{g\},\emptyset)$. Then the set $\mathcal{O}_{g^{+}}$ consists of those covectors in $\mathcal{O}$ whose sign at $g$ is $+$. For a given $X \in \mathcal{O}_{g^{+}}(E - \{g\})$ let $[X,+]$ be the signed subset $(X^{+} \cup \{g\}, X^{-})$, $[X,-]$ and $[X,0]$ are defined in the same way.
\vspace{0.2cm}
\begin{Def}\label{AOM}
Let $E$ be a finite set. A set $\mathcal{W}$ of signed subsets of $E$ is an \emph{affine oriented matroid} (or \emph{affine sign vector system}) if and only if there is an oriented matroid $\mathcal{O}$ on $E$ such that $\mathcal{W} = \mathcal{O}_{g^{+}}(E - \{g\})$.
\end{Def}\text{ }\\
\emph{\textbf{From Affine Arrangements to Crentral Arrangements}}\vspace{0.5cm}

As in the case of oriented matroids, affine oriented matroids can be considered as an abstraction of cell decompositions of affine arrangements. As mentioned in Section 1, central arrangements are only a special type of hyperplane arrangement, but every affine arrangement can be extended into a larger central arrangement as follows:\vspace{0.2cm}

If $\mathcal{H} = \{H_e:e \in E - \{g\}\}$ is an affine arrangement in $\mathbf{R}^{d-1}$, then we can embed $\mathcal{H}$ by the map $x \in \mathbf{R}^{d-1} \mapsto (x,1) \in \mathbf{R}^{d}$. The new coordinate corresponds to $g$ in $E$. For every $e \in E - \{g\}$ let $H'_e$ be the hyperplane in $\mathbf{R}^{d}$ spanned by all points $(x,1)$ where $x$ belongs to $H_e$, and set $H'_g = \{x \in \mathbf{R}^{d}:x_g = 0\}$.

Certainly, the new hyperplane arrangement $\mathcal{H'} = \{H'_e : e \in E\}$ is central. Any assignment of signs to sides of hyperplanes in $\mathcal{H}$ extends itself to the sides of hyperplanes in  $\mathcal{H'}$. The plus side of $H'_g$ is chosen arbitrarily. Let $\mathcal{O}$ denote the corresponding oriented matroid. Then the set of sign vectors with respect to $\mathcal{H}$ in $\mathbf{R}^{d-1}$ is exactly the affine oriented matroid $\mathcal{W} = \mathcal{O}_{g^{+}}(E - \{g\})$.\vspace{0.2cm}

To help us present the characterization of affine oriented matroids, we first introduce some new notations:

\begin{Def}
Let $E$ be a finite set. For $X,Y \in \{+,-,0\}^E$ with $\underline{X}=\underline{Y}$ and $X \neq Y$, we define\vspace{0.2cm}\\
$\bullet$	the \emph{e-elimination set} of $X$ and $Y$ for some $e\in S(X,Y)$ by
\[I_e(X,Y) = \{V \in \{+,-,0\}^E : \underline{V} \subseteq \underline{X} - \{e\} $ and $ V_f=X_f $ for all $ f \notin S(X,Y)\},\]
$\bullet$	the \emph{elimination set} of $X$ and $Y$ by
\[I(X,Y) = \bigcup_{e \in S(X,Y)} I_e(X,Y),\]
$\bullet$	the \emph{equal support set} of $X$ and $Y$ by
\[B(X,Y) = \{V \in \{+,-,0\}^E : V \notin \{X,Y\}, \underline{V} = \underline{X} $ and $ V_f=X_f $ for all $ f \notin S(X,Y)\}.\]
\end{Def}

\begin{Bem}\label{Reformulate}
Let $X,Y \in \{+,-,0\}^E$ be members of an oriented matroid $\mathcal{O}$ with $\underline{X} = \underline{Y}$ and $\text{ } X \neq Y$.\vspace{0.2cm}\\
(1) The sign vector $Z$ qualifies as a member of $I_e(X,Y)$ if it fulfills all requirements of $(O4)$ except that of belonging to $\mathcal{O}$. Thus one can rephrase $(O4)$ as follows:
\[(O4) $ if $ X,Y \in \mathcal{O} \text{ with } \underline{X} = \underline{Y}$ and $X \neq Y$ then $ I_e(X,Y) \cap \mathcal{O} \neq \emptyset \text{ for all } e\in S(X,Y).\]
In the same way we formulate an axiom that is weaker than $(O4)$ in general
\[(O4') $ if $ X,Y \in \mathcal{O} \text{ with } \underline{X} = \underline{Y} $ and $ X \neq Y $ then $ I(X,Y) \cap \mathcal{O} \neq \emptyset.\]
(2) Every member of $B(X,Y)$ has the same support as $X$, whereas the support of members of $I(X,Y)$ is smaller. Therefore
\[B(X,Y) \cap I(X,Y) = \emptyset.\]
The sign vectors $V$ that share the same sign with $X$ and $Y$ at each non-separation coordinate can be classified according to their signs at separation coordinates: Either all those signs are nonzero, i.e. $V \in \{X,Y\} \text{ } \dot{\cup} \text{ } B(X,Y)$, or some zero may occur, whence $V$ belongs to $I(X,Y)$:
\[\{V \in \{+,-,0\}^E : V_f=X_f \text{ for all } f \notin S(X,Y)\} = \{X,Y\} \text{ } \dot{\cup} \text{ } B(X,Y) \text{ } \dot{\cup} \text{ } I(X,Y).\]
\end{Bem}

\begin{Bsp}
Let $E=\{1,2,3,4\}, X=[+,-,+,0]=(\{1,3\},\{2\}), Y=[-,+,+,0]=(\{2,3\},\{1\}).$ Then
\begin{align*}
&S(X,Y)=\{1,2\},\\
&I_1(X,Y)=\{[0,-,+,0],[0,0,+,0],[0,+,+,0]\},\\
&I(X,Y)=\{[0,-,+,0],[0,0,+,0],[0,+,+,0],[-,0,+,0],[+,0,+,0]\},\text{ and}\\
&B(X,Y)=\{[+,+,+,0],[-,-,+,0]\}.
\end{align*}
\end{Bsp}

\begin{Lem}\label{equi}
Assuming $(O3)$, the axioms $(O4)$ and $(O4')$ are equivalent.
\end{Lem}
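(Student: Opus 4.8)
The plan is to dispose of $(O4)\Rightarrow(O4')$ in one line and then concentrate on the converse. Since $I(X,Y)=\bigcup_{e\in S(X,Y)}I_e(X,Y)$, any covector that witnesses $(O4)$ for some separating element automatically witnesses $(O4')$; note also that $S(X,Y)\neq\emptyset$ whenever $\underline X=\underline Y$ and $X\neq Y$, because on their common support $X$ and $Y$ can differ only by flipping a sign. So the substance of the lemma is: assuming $(O3)$ and $(O4')$, derive $(O4)$. Only $(O3)$ (not symmetry) will be needed.

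I would prove this by induction on $k:=|S(X,Y)|$, for a pair $X,Y\in\mathcal{O}$ with $\underline X=\underline Y$, $X\neq Y$ and a fixed $e\in S(X,Y)$. If $k=1$ then $S(X,Y)=\{e\}$, so $I(X,Y)=I_e(X,Y)$ and $(O4')$ is literally $(O4)$ for this pair. For $k\geq 2$, apply $(O4')$ to $(X,Y)$ to obtain $Z\in I_{e_0}(X,Y)\cap\mathcal{O}$ for some $e_0\in S(X,Y)$; by definition this means $\underline Z\subseteq\underline X-\{e_0\}$ and $Z_f=X_f$ for every $f\notin S(X,Y)$. If $Z_e=0$ we are already done, since then $\underline Z\subseteq\underline X-\{e\}$ and so $Z\in I_e(X,Y)\cap\mathcal{O}$. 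Otherwise $Z_e\in\{+,-\}$, and because $\{X_e,Y_e\}=\{+,-\}$ exactly one of $X,Y$ — call it $U$ — satisfies $U_e=-Z_e$; also $e_0\neq e$, for otherwise $Z_e$ would be $0$.

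The heart of the argument is to replace $(X,Y)$ by the strictly smaller pair $(Z\circ U,\,U)$. Using $(O3)$, both lie in $\mathcal{O}$; both have support $\underline X$, since $\underline Z\subseteq\underline X=\underline U$; they differ at $e$, as $(Z\circ U)_e=Z_e\neq U_e$; and they agree at $e_0$, since $Z_{e_0}=0$ forces $(Z\circ U)_{e_0}=U_{e_0}$. Moreover $Z$, hence $Z\circ U$, agrees with $X$ off $S(X,Y)$ (here one uses the elementary fact that $X$ and $Y$ agree off $S(X,Y)$), so $e\in S(Z\circ U,U)\subseteq S(X,Y)-\{e_0\}$ and therefore $|S(Z\circ U,U)|\leq k-1$. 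The induction hypothesis now yields $V\in I_e(Z\circ U,U)\cap\mathcal{O}$, and I would check directly that $V\in I_e(X,Y)$: the support bound $\underline V\subseteq\underline X-\{e\}$ follows from $\underline{Z\circ U}=\underline X$, and for $f\notin S(X,Y)$ one has $f\notin S(Z\circ U,U)$ as well, whence $V_f=(Z\circ U)_f=X_f$. This closes the induction.

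I expect the only real obstacle to be bookkeeping rather than a genuine idea. The covector $Z$ supplied by weak elimination has strictly smaller support than $X$ and $Y$, so it cannot be fed back into an elimination statement, all of which demand equal supports. Composing $Z$ with the \emph{right} one of $X,Y$ is exactly the fix: it restores the support to $\underline X$, retains $Z$'s sign at $e$ so that $e$ remains separating, and converts the newly produced zero at $e_0$ into an agreement, thereby shrinking the separation set — precisely the leverage the induction needs. The remaining verifications ($Z\circ U\neq U$, the inclusion $I_e(Z\circ U,U)\subseteq I_e(X,Y)$, and the agreement of $X,Y$ off $S(X,Y)$) are routine once this move is in place.
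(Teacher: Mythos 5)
Your proof is correct and takes essentially the same route as the paper's: the paper proceeds by a minimal-counterexample argument on $|S(X,Y)|$, fixes $Z_e=X_e$ without loss of generality, and composes $Z\circ Y$ to shrink the separation set while retaining $e$; your induction on $|S(X,Y)|$ with $(Z\circ U,U)$ is the same move without the WLOG. There is no substantive difference.
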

\begin{proof}
$(O4) \Rightarrow (O4')$ is trivial since $I_e(X,Y) \subseteq I(X,Y)$.

To prove the reverse implication let $\mathcal{O} \subseteq \{+,-,0\}^E \text{ satisfy }(O3) \text{ and } (O4')$. Suppose for a contradiction that $\mathcal{O}$ does not satisfy $(O4)$. Choose $X,Y \in \mathcal{O}$ with $\underline{X} = \underline{Y}$ and $\text{ } X \neq Y$ such that $|S(X,Y)|$ is minimal and $I_e(X,Y) \cap \mathcal{O} = \emptyset$ for some $e \in S(X,Y)$. 

By $(O4')$, there exists some $Z \in I(X,Y) \cap \mathcal{O}$. 
If $Z_e=0$, then $Z \in I_e(X,Y) \cap \mathcal{O}$, which is a contradiction.
Otherwise $e \in \underline{Z}$, and with out loss of generality we can assume $Z_e = X_e \neq 0$. Pick $g \in S(X,Y) - \{e\}$ with $Z_g=0$. By $(O3)$, $X'= Z \circ Y$ is a member of $\mathcal{O}$ satisfying\vspace{0.1cm}\\
(1) $\underline{X}' = \underline{Y}$,\vspace{0.1cm}\\
(2) $X' \neq Y$,\vspace{0.1cm}\\
(3) $S(X',Y) \subset S(X,Y)$, and hence\vspace{0.1cm}\\
(4) $I_e(X',Y) \subseteq I_e(X,Y)$.\\
This contradicts the minimality of $S(X,Y)$.
\end{proof}\vspace{0.2cm}
The axiomatization of affine oriented matroids employ three axioms, of which the first two resemble axioms $(O3)$ and $(O4)$. The last one is given in terms of a particular set $\mathcal{P(W)}$ of sign vectors, to be defined next.

\begin{Def}
Let $E$ be a finite set and $X,Y$ be signed subsets of $E$. Then the \emph{sum} $X + Y$ of $X$ and $Y$ is given by
    \[(X + Y)_e = \begin{cases}
    0, & e \in S(X,Y), 	\\
    (X \circ Y)_{e}, & otherwise.
    \end{cases}\]
\end{Def}
Note that if $\underline{X} = \underline{Y}$ holds, then the sign vector $(X + Y)$ is the $\preceq$-minimal member of $I(X,Y)$.
\begin{Def}
Let $E$ be a finite set and $\mathcal{W} \subseteq \{+,-,0\}^E.$ Then
\begin{align*}
\bullet\text{ }&\sym(\mathcal{W})=\{V \in \{+,-,0\}^E : \pm V \in \mathcal{W}\} \text{ is the maximal symmetric subset of $\mathcal{W}$},\\
\bullet\text{ }&\asym(\mathcal{W})=\{V \in \{+,-,0\}^E: V \in \mathcal{W},-V \notin \mathcal{W}\} \text{ is the complement of $\sym(\mathcal{W})$ in $\mathcal{W}$},\\
\bullet\text{ }&\mathcal{P}(\mathcal{W})=\{X+(-Y):  X,Y \in \asym(\mathcal{W}),\underline{X}=\underline{Y} \text{ and } I(X,-Y) \cap \mathcal{W}=I(-X,Y) \cap \mathcal{W}=\emptyset\}\\
&\text{is the set of \emph{parallel vectors} of $\mathcal{W}$}.
\end{align*}
\end{Def}\text{ }\\
\emph{\textbf{Sign Vectors corrsponding to Points at Infinity}}\vspace{0.5cm}

If $\mathcal{W}$ is generated by an affine arrangement, then the set $\mathcal{P(W)}$ consists of the sign vectors corresponding to virtual points at infinity: Each member of $\mathcal{P(W)}$, except for $(\emptyset,\emptyset)$, represents a parallel class of hyperplanes, resembling the point at infinity from projective geometry. Figure 5 illustrates this idea with a minimal example.\vspace{0.5cm}

\begin{figure}[h]
\centering
\begin{tikzpicture}[scale=0.9,cap=round,>=latex]
  \coordinate (center) at (0,0);
  \draw (0,-3) -- (0,2);
  \draw (0,2) node[anchor=south]{$H_a$};
  \draw (0,1.7) node[anchor=south east]{$+$};
  \draw (0,1.7) node[anchor=south west]{$-$};
  \draw (-3,1) -- (3,1);
  \draw (-3,1) node[anchor=east]{$H_b$};
  \draw (-2.7,1) node[anchor=south east]{$-$};
  \draw (-2.7,1) node[anchor=north east]{$+$};
  \draw (-3,-1) -- (3,-1);
  \draw (-3,-1) node[anchor=east]{$H_c$};
  \draw (-2.7,-1) node[anchor=south east]{$-$};
  \draw (-2.7,-1) node[anchor=north east]{$+$};

  \filldraw [black] (-1.5,-1) circle (2pt);
  \draw (-1.5,-1) node[anchor=north]{$X$};
  \filldraw [black] (1.5,-1) circle (2pt);
  \draw (1.5,-1) node[anchor=north]{$Y$};

  \draw (-3.7,1)[dotted] ..controls (-5.5,0.8).. (-6.1,0);
  \draw (-6.1,0)[dashed] -- (-7.3,1.5);
  \draw (-3.7,-1)[dotted] ..controls (-5.5,-0.8).. (-6.1,0);
  \draw (-6.1,0)[dotted] -- (-7.3,-1.5);

  \filldraw [black] (-6.1,0) circle (2pt);
  \draw (-6.1,0) node[anchor=east]{$P = X + (-Y)$};
  \filldraw [black] (-6.9,1) circle (2pt);
  \draw (-6.9,1) node[anchor=west]{$-Y$};

  \draw (5,-0.5) node[anchor=east]{$X$};
  \draw (5,-0.5) node[anchor=west]{$= [+,+,0],$};
  \draw (5,-1) node[anchor=east]{$Y$};
  \draw (5,-1) node[anchor=west]{$= [-,+,0],$};
  \draw (5,-1.5) node[anchor=east]{$-Y$};
  \draw (5,-1.5) node[anchor=west]{$= [+,-,0],$};
  \draw (5,-2) node[anchor=east]{$P$};
  \draw (5,-2) node[anchor=west]{$ = [+,0,0].$};
\end{tikzpicture}
\vspace{0.5cm}
\caption{Illustration of $P$ as a point at infinity.}
\end{figure}
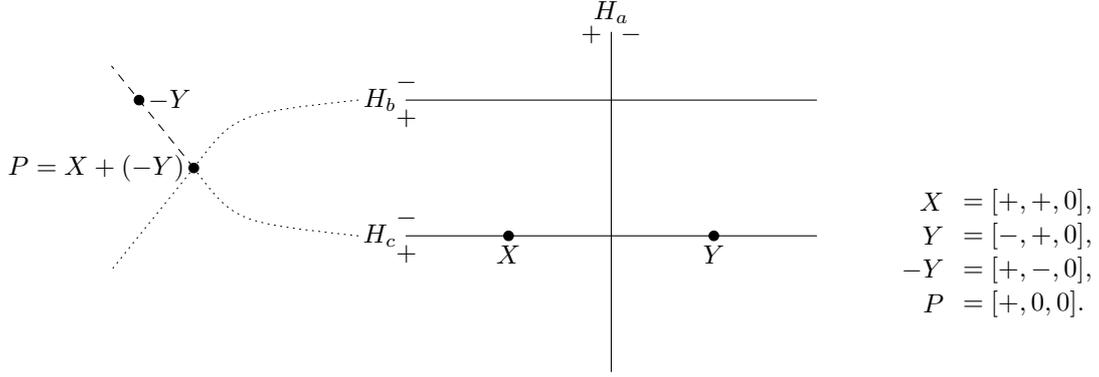

Let $E=\{a,b,c\}$ be the ground set and $\mathcal{H} = \{H_a,H_b,H_c\}$ where $H_b \parallel H_c$ and $H_a$ meets both of them, as shown in Figure 5. And let $\mathcal{W} = \sym(\mathcal{W}) \cup \asym(\mathcal{W})$ be the sign vector system corresponding to $\mathcal{H}$ where\vspace{0.2cm}
\begin{align*}
\sym(\mathcal{W}) = \{&[0,-,-],[+,-,-],[+,+,+]\text{ and their opposites}\},\\
\asym(\mathcal{W}) = \{&[0,0,-],[0,+,0],[0,+,-],[+,0,-],[-,0,-],[+,+,0],\\
&[-,+,0],[+,+,-],[-,+,-]\}.
\end{align*}

We wish to find a pair $X$ and $Y$ signifying cells within $H_c$ such that $P = X + (-Y)$ presents the parallel class of $H_c$.

Since $\mathcal{W}$ corresponds to $\mathcal{H}$, every cell of $\mathcal{H}$ (in the case of Figure 5, these are the intersection points of pairs of hyperplanes, segments and open regions) should have its own sign vector in $\mathcal{W}$.
Thus, if $H_b$ meets $H_c$ at the plus side of $H_a$ as shown with the dotted/dashed lines, then the dashed segment must have its sign vector $-Y$ in $\mathcal{W}$, or equivalently $Y \in \sym(\mathcal{W})$. By applying the same argument to the minus side of $H_a$, this conclusion holds for $-X$, meaning that we have the implication "if $H_b$ meets $H_c$, then at least one of $X$ and $Y$ must be in $\sym(\mathcal{W})$". Notice that the fact "$H_b$ is parallel to $H_c$" can be expressed as "$H_b$ and $H_c$ have no point in common on either side of $H_a$". Therefore, if we wish to construct some $P = X + (-Y)$ that represents this parallel class, then $X$ and $Y$ must be members of $\asym(\mathcal{W})$.

Moreover, the point at infinity should belong exactly to those members of the parallel class represented by that point. In our case, this means $P^0 = \{e \in E : H_e \parallel H_c\}$, i.e. $P$ belongs to the intersection of all hyperplanes in the parallel class of $H_c$. Therefore, any hyperplane having a nonempty intersection with $H_c$ (and therefore with every hyperplane in the parallel class of $H_c$) must have $X$ and $-Y$ on the same side, or equivalently, such a hyperplane must separate $X$ and $Y$. This motivates the other requirement $I(X,-Y) \cap \mathcal{W} = \emptyset$.

Indeed, the intersection of $H_a$ and the segment between $X$ and $Y$ in our example has the sign vector $[0,+,0] \in \mathcal{W}$. It is also easy to see that $[0,+,0]$ belongs to $I(X,Y)$. More generally, if $U$ and $V$ correspond to cells belonging to the same hyperplane $H_f$ and if some hyperplane $H_e$ separates $U$ and $V$, then the sign vector corresponding to $H_e \cap H_f$ must be a member of $I(U,V)$. In other words, to construct a member $P$ of $\mathcal{P(W)}$, we must ensure that the segment between $X$ and $-Y$ only meets hyperplanes in the same parallel class as $H_b$ but no others. Or put differently, $X$ and $Y$ are chosen so that the segment between $X$ and $Y$ meets every hyperplane not belonging to the parallel class $H_c$.

By applying the above argument to the minus side of $H_a$, one arrives at the analog conclusion for $X$ and $I(-X,Y)$. Every member of $\mathcal{P(W)}$ thus represents, together with its opposite, one parallel class of hyperplanes. Now we can state Karlander's axiomatization of affine oriented matroids.
\begin{Thm}\label{karlander} 
A set $\mathcal{W} \subseteq\{+,-,0\}^E$ is an affine oriented matroid if and only if $\mathcal{W}$ satisfies
\begin{align*}
(A1) &\text{ if } X,Y \in \mathcal{W} \text{ then } X \circ (\pm Y) \in \mathcal{W},\\
(A2) &\text{ if } X,Y \in \mathcal{W} \text{ with }\underline{X}=\underline{Y} \text{ then } I_e(X,Y) \cap \mathcal{W} \neq \emptyset \text{ for all $e \in S(X,Y)$},\\
(A3) &\text{ } \mathcal{P}(\mathcal{W}) \circ \mathcal{W} \subseteq \mathcal{W}.
\end{align*}
\end{Thm}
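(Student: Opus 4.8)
The statement is an equivalence, and I would treat the two implications separately. Throughout write $E':=E\cup\{g\}$ for a new element $g$, and for $W\in\{+,-,0\}^E$ let $[W,+],[W,-],[W,0]\in\{+,-,0\}^{E'}$ denote the three extensions of $W$ to $E'$.

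\emph{Necessity.} Suppose $\mathcal{W}=\mathcal{O}_{g^+}(E)$ for an oriented matroid $\mathcal{O}$ on $E'$. Then $[W,+]\in\mathcal{O}$ for every $W\in\mathcal{W}$, and hence $[-W,-]=-[W,+]\in\mathcal{O}$ by $(O2)$. Axioms $(A1)$ and $(A2)$ are obtained by lifting to $\mathcal{O}$: for $(A1)$ apply $(O3)$ to $[X,+]$ and $[\pm Y,\pm]$ with the $g$-signs chosen so the composite is $g$-positive, then restrict to $E$; for $(A2)$ apply $(O4)$ to $[X,+]$ and $[Y,+]$, noting $S([X,+],[Y,+])=S(X,Y)$ and that the eliminated covector is again $g$-positive, so its restriction to $E$ lies in $I_e(X,Y)\cap\mathcal{W}$ (Remark~\ref{Reformulate}). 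The substantive point is $(A3)$, which I would derive from the following \emph{Key Lemma}: if $P\in\mathcal{P}(\mathcal{W})$ then $[P,0]\in\mathcal{O}$.

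Granting the Key Lemma, $(O3)$ gives $[P,0]\circ[W,+]=[P\circ W,+]\in\mathcal{O}$ for all $W\in\mathcal{W}$, whence $P\circ W\in\mathcal{W}$; this is $(A3)$. To prove the Key Lemma, write $P=X+(-Y)$ with $X,Y\in\asym(\mathcal{W})$, $\underline{X}=\underline{Y}$, and $I(X,-Y)\cap\mathcal{W}=I(-X,Y)\cap\mathcal{W}=\emptyset$, and observe that $[P,0]=[X,+]+[-Y,-]$ is the $\preceq$-minimal element of the elimination set $I([X,+],[-Y,-])$, whose separation set is $S(X,-Y)\cup\{g\}$. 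By $(O4)$ this set meets $\mathcal{O}$, so it suffices to rule out every other element of it. By Remark~\ref{Reformulate} the remaining elements have the form $[V',\varepsilon]$ with $\varepsilon\in\{+,-,0\}$ and $V'\in\{X,-Y\}\cup B(X,-Y)\cup\bigl(I(X,-Y)\setminus\{P\}\bigr)$, and I would exclude these in turn: $[X,0]$ and $[-Y,0]$ fail because $[X,0]\circ[-X,-]=[X,-]\in\mathcal{O}$ would force $-X\in\mathcal{W}$, contradicting $X\in\asym(\mathcal{W})$ (and symmetrically for $Y$); the cases $\varepsilon=\pm$ with $V'\in I(X,-Y)$ contradict one of the two emptiness hypotheses at once; and for the cases $V'\in B(X,-Y)$ or $\varepsilon=0$, $V'\in I(X,-Y)\setminus\{P\}$, one composes the putative covector with $[X,+]$ or $[-X,-]$ to obtain a member of $\mathcal{W}$ lying in $B(X,-Y)$ (or, after the reorientation identity $_{-S(X,Y)}X=Y$, in $B(Y,-X)$), whereupon a further application of $(A2)$ produces an element of $I(X,-Y)\cap\mathcal{W}$ or $I(-X,Y)\cap\mathcal{W}$ — again a contradiction. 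This last step, which genuinely uses both halves of the $\mathcal{P}(\mathcal{W})$-condition, is the delicate part of the necessity direction.

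\emph{Sufficiency.} Suppose $\mathcal{W}$ satisfies $(A1)$--$(A3)$. Exactly as in Corollary~\ref{tope EqS}, but using $(A1)$ in place of $(O3)$, the tope set $\mathcal{T}(\mathcal{W})$ has a common support $F$, and every member of $\mathcal{W}$ has support inside $F$. I would then build the ambient oriented matroid directly: put
\[
\mathcal{C}:=\{\,Q\in\{+,-,0\}^E : Q\circ\mathcal{W}\subseteq\mathcal{W}\ \text{and}\ (-Q)\circ\mathcal{W}\subseteq\mathcal{W}\,\},
\]
\[
\mathcal{O}:=\{[W,+]:W\in\mathcal{W}\}\cup\{[-W,-]:W\in\mathcal{W}\}\cup\{[Q,0]:Q\in\mathcal{C}\}.
\]
Then $\mathcal{O}_{g^+}(E)=\mathcal{W}$ is immediate, as the $g$-positive part of $\mathcal{O}$ is precisely $\{[W,+]:W\in\mathcal{W}\}$, so it remains to check that $\mathcal{O}$ is an oriented matroid. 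Axiom $(O1)$ holds since $(\emptyset,\emptyset)\in\mathcal{C}$; $(O2)$ holds since $\mathcal{C}$ and $\mathcal{W}\cup(-\mathcal{W})$ are symmetric; $(O3)$ I would verify by inspecting the types of composition, the only non-routine one being $[W,+]\circ[Q,0]=[W\circ Q,+]$, for which $W\circ Q=W\circ(Q\circ W)\in\mathcal{W}$ by $(A1)$ — and it is here that $(A3)$ enters, through the inclusion $\mathcal{P}(\mathcal{W})\subseteq\mathcal{C}$. For $(O4)$ I would invoke Lemma~\ref{equi} and verify $(O4')$ by distinguishing the pair $(U_g,V_g)$, which (since $\underline U=\underline V$) is one of $(+,+),(-,-),(+,-),(-,+),(0,0)$. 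The cases $(+,+)$ and $(-,-)$ follow from $(A2)$ after lifting. The case $(+,-)$ splits according to whether the $E$-parts $W,W'$ of $U,V$ coincide (then $I(U,V)=\{[W,0]\}$ with $W\in\sym(\mathcal{W})\subseteq\mathcal{C}$), whether $W'\in\mathcal{W}$ or $W\in\sym(\mathcal{W})$ (handled by $(A2)$, composing the witness with a suitable $g$-sign), or whether $W,-W'\in\asym(\mathcal{W})$ with $I(W,W')\cap\mathcal{W}=I(-W,-W')\cap\mathcal{W}=\emptyset$ — in which case $W+W'\in\mathcal{P}(\mathcal{W})\subseteq\mathcal{C}$ and $[W+W',0]$ does the job, again via $(A3)$. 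The case $(0,0)$ reduces to the assertion that $\mathcal{C}$ itself satisfies $(O4')$.

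\emph{Main obstacle.} The crux is this last reduction: one must show that $\mathcal{C}$, the candidate ``oriented matroid at infinity'', satisfies the elimination axiom, i.e. that $\mathcal{C}$ is an oriented matroid on $E$. I expect the clean route is to compose the two given members of $\mathcal{C}$ with a tope of $\mathcal{W}$, reduce to an elimination among topes of $\mathcal{W}$ (available from $(A1)$ and $(A2)$, by the argument that yields tope elimination for oriented matroids), and push the resulting tope back into $\mathcal{C}$ using $(A1)$ and $(A3)$; alternatively one can avoid the explicit construction of $\mathcal{O}$ altogether by defining it via Mandel's theorem from the lifted tope set $\{[T,+]:T\in\mathcal{T}(\mathcal{W})\}\cup\{[-T,-]:T\in\mathcal{T}(\mathcal{W})\}$, trading this obstacle for the tasks of showing that this set satisfies the tope axioms and that $\mathcal{W}$ is recovered from it. It is precisely at this stage — reconstructing the structure at infinity and proving it compatible with $\mathcal{W}$ — that Karlander's construction and argument break down, and where the replacement argument must do its work.
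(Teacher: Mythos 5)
Your construction coincides with the paper's: your set $\mathcal{C}$ is exactly $\mathcal{N}(\mathcal{W})$ (Definition 2.4), and your $\mathcal{O}$ is exactly $\mathcal{W}^\dagger$ (Remark 2.2). The necessity argument you sketch (the ``Key Lemma'' that $[P,0]\in\mathcal{O}$) is essentially the paper's Proposition \ref{N zerlegung}(2), and the outline you give — apply $(O4)$ across $g$ to produce a $g$-zero covector in $I([X,+],[-Y,-])$, then rule out everything but the $\preceq$-minimum — is sound and matches the paper's proof in structure, including the use of the $B(X,-Y)$-emptiness fact (Lemma \ref{B property}).

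The genuine gap is in the sufficiency direction, and it is precisely the place you flag as the ``main obstacle'': verifying $(O4')$ when both $V_1=[N_1,0]$ and $V_2=[N_2,0]$ have $g$-coordinate $0$, i.e.\ showing that $\mathcal{N}(\mathcal{W})$ itself satisfies an elimination property. You propose two routes — eliminate among lifted topes, or invoke Mandel's theorem on the lifted tope set — but you carry neither through, and neither is straightforward. Mandel's theorem is a reconstruction statement for an object already known to be an oriented matroid; using it here would require an independent tope-set axiomatization and a proof that the lifted tope set satisfies it, none of which you supply. The tope-elimination route is closer in spirit to what is actually needed, but the required argument is delicate. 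The paper handles this case (Cases 3.1 and 3.2 of $(O4')$) by introducing an auxiliary set $\mathcal{A}(N_1,N_2)$ of pairs $(X,-Y)$ of asymmetric covectors sitting inside $I(N_1,N_2)$, choosing a pair of minimal separation, and proving that $X+(-Y)$ lies in $I(N_1,N_2)\cap\mathcal{P}(\mathcal{W})$; the subcase $N_1,N_2\in\mathcal{P}(\mathcal{W})$ additionally requires the technical Lemma \ref{Zauber} to transfer the problem to compositions $N_i\circ U$ that do live in $\mathcal{W}$, so that $(A2)$ becomes applicable. This is exactly the point where Karlander's original manuscript broke down (Section 4 and Lemma \ref{gegenbsp}: the naive choice of pair $(U,-V)$ cannot yield an element of $\mathcal{P}(\mathcal{W})$), so identifying the obstacle is correct and important, but without the $\mathcal{A}$-construction and the Zauber lemma the proof is incomplete at the very step that the paper's argument was written to repair.

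A smaller point: in Case 2 of $(O3)$ you only single out $[W,+]\circ[Q,0]$; there is also $[Q,0]\circ[Q',0]=[Q\circ Q',0]$, which requires $\mathcal{N}(\mathcal{W})$ to be closed under composition. With your definition of $\mathcal{C}$ this is in fact immediate ($(\pm(Q\circ Q'))\circ\mathcal{W}=(\pm Q)\circ((\pm Q')\circ\mathcal{W})\subseteq\mathcal{W}$), so no harm is done, but it should be said. The paper spends more effort here because it wants the stronger fact $N\circ N'\in\sym(\mathcal{W})\cup\mathcal{P}(\mathcal{W})$, which is what gets reused in the $(O4')$ case analysis.
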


To motivate the axiom $(A3)$, recall that $\mathcal{P(W)}$ serves as the set of points at infinity. If we choose  $X \in \mathcal{W}$ and $P \in \mathcal{P}$, then the segment between $X$ and $P$ must pass through a cell corresponding to $P \circ X$ before going to infinity. Therefore, $P \circ X$ must belong to $\mathcal{W}$. The proof of Theorem 2.1 is somewhat involved and will be provided at the end of this section. We still need some preparation. 

\begin{Def}
Let $E$ be a finite set and $\mathcal{W} \subseteq \{+,-,0\}^E.$ Then
\[\mathcal{N(W)} =\{N \in \{+,-,0\}^{E}: (\pm N) \circ \mathcal{W} \subseteq \mathcal{W}\}.\]
\end{Def}
While $\mathcal{P}(\mathcal{W})$ is needed for the axiomatization, $\mathcal{N}(\mathcal{W})$ is used to reconstruct an oriented matroid $\mathcal{O}$ from a given affine oriented matroid $\mathcal{W}$. The theorem of Mandel suggests how this might be done: we can reconstruct the tope set of $\mathcal{O}$ from $\mathcal{W}$ as shown in the next lemma.
\vspace{0.2cm}
\begin{Lem}\label{N property}
Let $\text{ } \mathcal{W} \subseteq \{+,-,0\}^E$ be an affine oriented matroid obtained from an oriented matroid $\mathcal{O} \subseteq \{+,-,0\}^{E \cup \{g\}}$ as in Definition \ref{AOM}. Then
\[\mathcal{N}(\mathcal{W})=\{N \in \{+,-,0\}^E : [N,0] \in \mathcal{O}\}.\]
\end{Lem}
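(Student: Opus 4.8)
The plan is to prove the two inclusions separately, the inclusion ``$\supseteq$'' being a routine application of $(O2)$ and $(O3)$ and the inclusion ``$\subseteq$'' carrying all the content, for which I would invoke Mandel's theorem. Two elementary facts handle most of the bookkeeping: a sign vector on $E\cup\{g\}$ is determined by its restriction to $E$ together with its value at $g$; and composition commutes with restriction, so that, because $[N,0]_g=0$, one has $([N,0]\circ X)(E)=N\circ X(E)$ and $([N,0]\circ X)_g=X_g$ for every $X$. Throughout I assume $g$ is not a loop of $\mathcal{O}$ (equivalently $\mathcal{W}\neq\emptyset$), so that every tope of $\mathcal{O}$ has a nonzero entry at $g$.

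For ``$\supseteq$'': assuming $[N,0]\in\mathcal{O}$, I would take $W\in\mathcal{W}$, lift it to $X\in\mathcal{O}$ with $X_g=+$ and $X(E)=W$, and apply $(O3)$ to get $[N,0]\circ X\in\mathcal{O}$; by the facts above this covector has value $+$ at $g$ and restricts to $N\circ W$, so $N\circ W\in\mathcal{W}$. Repeating with $-[N,0]=[-N,0]\in\mathcal{O}$ (via $(O2)$) gives $(-N)\circ W\in\mathcal{W}$, hence $N\in\mathcal{N}(\mathcal{W})$.

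For ``$\subseteq$'': given $N\in\mathcal{N}(\mathcal{W})$, note first that $-N\in\mathcal{N}(\mathcal{W})$ as well, the defining condition being symmetric in $N$ and $-N$. By Mandel's theorem it then suffices to show $[N,0]\circ T\in\mathcal{T}(\mathcal{O})$ for every tope $T$. Since $g$ is not a loop, $T_g\neq 0$ (else, for any covector $Y$ with $Y_g\neq0$, the covector $Y\circ T$ would have support strictly larger than $\underline{T}$ — the argument of Corollary \ref{tope EqS}). If $T_g=+$, then $T(E)\in\mathcal{W}$, so $N\circ T(E)\in\mathcal{W}$; its only lift to $\mathcal{O}$ with value $+$ at $g$ is $[N,0]\circ T$ itself, whence $[N,0]\circ T\in\mathcal{O}$. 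If $T_g=-$, I would apply this to the tope $-T$ and to $-N\in\mathcal{N}(\mathcal{W})$, obtaining $[-N,0]\circ(-T)\in\mathcal{O}$, and then pass to opposites via $(O2)$ using the identity $-\bigl([-N,0]\circ(-T)\bigr)=[N,0]\circ T$. In both cases $[N,0]\circ T\in\mathcal{O}$, and since its support contains $\underline{T}$, the largest support occurring in $\mathcal{O}$, it is in fact a tope; Mandel's theorem then yields $[N,0]\in\mathcal{O}$.

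The step I expect to be the main obstacle is spotting that Mandel's theorem is the right vehicle and, with it, that $[N,0]\circ T$ is forced to coincide with the unique lift of $N\circ T(E)$ having the correct sign at $g$ — so that ``$[N,0]\circ T\in\mathcal{O}$'' and ``$[N,0]\circ T$ is a tope'' become the same statement. Once that route is chosen, the symmetry of $\mathcal{N}(\mathcal{W})$, the observation $T_g\neq0$, and the sign computation $-([-N,0]\circ(-T))=[N,0]\circ T$ are all immediate.
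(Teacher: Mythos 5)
Your proof is correct, and it is in fact a cleaner route than the paper's. Both arguments hinge on Mandel's theorem for the ``hard'' direction $N\in\mathcal{N}(\mathcal{W})\Rightarrow[N,0]\in\mathcal{O}$, where you verify $[N,0]\circ T\in\mathcal{T}(\mathcal{O})$ for every tope $T$ by splitting on $T_g\in\{+,-\}$ and deducing topehood from the support comparison $\underline{[N,0]\circ T}\supseteq\underline{T}$. The paper instead introduces an auxiliary set $\mathcal{N}'(\mathcal{W})=\{N:(\pm N)\circ\mathcal{T}(\mathcal{W})\subseteq\mathcal{T}(\mathcal{W})\}$, proves $\mathcal{N}'(\mathcal{W})=\{N:[N,0]\in\mathcal{O}\}$ in two steps (Mandel for one, $(O2)$/$(O3)$ for the other), and then relates $\mathcal{N}(\mathcal{W})$ to $\mathcal{N}'(\mathcal{W})$ by invoking Mandel a second time. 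Your ``$\supseteq$'' direction shows that this second appeal to Mandel is superfluous: once $[\pm N,0]\in\mathcal{O}$, the covector $[\pm N,0]\circ[W,+]$ lies in $\mathcal{O}$ by $(O3)$ alone, so $(\pm N)\circ W\in\mathcal{W}$ directly. So what your approach buys is the elimination of the intermediate set $\mathcal{N}'(\mathcal{W})$ and one of the two Mandel invocations, at the small cost of spelling out the support argument that $[N,0]\circ T$ is actually a tope (a step the paper leaves implicit in its backward inclusion). Your explicit caveat that $g$ is not a loop, i.e.\ $\mathcal{W}\neq\emptyset$, is also the right one: without it the asserted equality can genuinely fail, and the paper only gestures at this via the aside ``Note that $g$ must be contained in the support of the tope set of $\mathcal{O}$''.
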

\begin{proof}
Let $\mathcal{T(W)}$ comprise the members of $\mathcal{W}$ with maximal support in $\mathcal{W}$. Note that for all $T \in \{+,-,0\}^{E}$ we have $T \in \mathcal{T(W)}$ if and only if $[T,+] \in \mathcal{T(O)}$.

Let $\mathcal{N'(W)} = \{N \in \{+,-,0\}^E : (\pm N) \circ \mathcal{T(W)} \subseteq \mathcal{T(W)}\}$. First we claim that $\mathcal{N'(W)} = \{N \in \{+,-,0\}^E : [N,0] \in \mathcal{O}\}$.
\vspace{0.2cm}

For the forward inclusion let $N \in \mathcal{N'(W)}$. By the Theorem of Mandel, it is sufficient to show $[N,0] \circ \mathcal{T(O)} \subseteq \mathcal{T(O)}$. Let $T'$ be a tope of $\mathcal{O}$. If $T' = [T,+]$, then $T \in \mathcal{T(W)}$ and we have $N \circ T \in \mathcal{T(W)}$, thus $[N,0] \circ T' = [N \circ T,+] \in \mathcal{T(O)}$. Otherwise, if  $T' = [T,-]$, we must have $-T \in \mathcal{T(W)}$ by $(O2)$. Using the same equality we conclude $[-N,0] \circ [-T,+] = [-(N \circ T),+] \in \mathcal{T(O)}$, and, by $(O2)$ once again, $[N,0] \circ T' = -[-(N \circ T),+] \in \mathcal{T(O)}$. Note that $g$ must be contained in the support of the tope set of $\mathcal{O}$, so that we have  $[N,0] \circ \mathcal{T(O)} \subseteq \mathcal{T(O)}$.

For the backward inclusion let N be a signed subset of E such that $[N,0] \in \mathcal{O}$. By $(O2)$, $[-N,0] \in \mathcal{O}$. For arbitrary $T \in \mathcal{T(W)}$ we have $[T,+] \in \mathcal{O}$. $(O3)$ implies $[(\pm N) \circ T,+] = [\pm N,0] \circ [T,+] \in \mathcal{T(O)}$. Therefore $(\pm N) \circ T \in \mathcal{T(W)}$ and thus $N \in \mathcal{N'(W)}$.
\vspace{0.2cm}

With the claim above we only have to show that $\mathcal{N}(\mathcal{W}) = \mathcal{N}'(\mathcal{W})$. The forward inclusion is trivial. For the backward inclusion let $N \in \mathcal{N'(W)}$, and $V \in \mathcal{W}$ be arbitrary. Using the Theorem of Mandel we have $(\pm N) \circ V \in \mathcal{W}$ if and only if $[(\pm N) \circ V,+] \circ \mathcal{T(O)} = [\pm N,0] \circ ([V,+] \circ \mathcal{T(O)}) \subseteq \mathcal{T(O)}$. By definition of $\mathcal{W}$ and the claim above, $[\pm N,0]$ and $[V,+]$ are members of $\mathcal{O}$. Therefore $[\pm N,0] \circ ([V,+] \circ \mathcal{T(O)}) \subseteq [\pm N,0] \circ \mathcal{T(O)} \subseteq \mathcal{T(O)}$.
\end{proof}

\begin{Bem}
Let $\mathcal{W} \subseteq \{+,-,0\}^E$ be an affine sign vector system obtained from an oriented matroid $\mathcal{O}$ on $E \cup \{g\}$ as in Definition \ref{AOM}. Then the set of members of $\mathcal{O}$ with $g$ contained in their supports is given either by $\mathcal{W}$ or by $-\mathcal{W}$. Using Lemma \ref{N property}, one is able to find the remaining members of $\mathcal{O}$, those having sign $0$ in $g$. Let
\[\mathcal{W}^\dagger = \{[W,+] : W \in \mathcal{W}\} \cup \{[-W,-] : W \in \mathcal{W}\} \cup \{[N,0] : N \in \mathcal{N}(\mathcal{W})\}.\]
Then $\mathcal{W}$ is an affine oriented matroid if and only if $\mathcal{W}^\dagger$ is an oriented matroid. Obviously, $\mathcal{W}^\dagger$ is the unique oriented matroid with $\mathcal{W}$ as its affine oriented matroid.
\end{Bem}
\vspace{0.2cm}
The next goal is to clarify the relation between $\mathcal{P}(\mathcal{W})$ and $\mathcal{N}(\mathcal{W})$, which is important for the proof of Theorem 2.1. Before that, a property of $B(X,Y)$ has to be recorded.
\vspace{0.2cm}
\begin{Lem}\label{B property}
If $\mathcal{W}$ is an affine oriented matroid and $X,Y \in \mathcal{W}$ with $I(X,-Y) \cap \mathcal{W} = \emptyset$, then $B(X,-Y) \cap \mathcal{W} = \emptyset$.
\end{Lem}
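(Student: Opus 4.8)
\emph{Proof proposal.} Note first that for the statement to be meaningful $X$ and $-Y$ must have a common support, so $\underline{X}=\underline{Y}$, and $X\neq -Y$, so $S(X,-Y)\neq\emptyset$; I take these for granted. I would argue by contradiction, so suppose $V\in B(X,-Y)\cap\mathcal{W}$. Fixing, as in Definition~\ref{AOM}, an oriented matroid $\mathcal{O}$ on $E\cup\{g\}$ with $\mathcal{W}=\mathcal{O}_{g^{+}}(E-\{g\})$, so that $[X,+],[V,+]\in\mathcal{O}$ by definition of $\mathcal{W}$, the plan is to feed $[X,+]$ and $[V,+]$ into the elimination axiom $(O4)$ inside $\mathcal{O}$ and read off an element of $I(X,-Y)\cap\mathcal{W}$ from the result.

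Here is the intended sequence of steps. First I would observe that $[X,+]$ and $[V,+]$ share the support $\underline{X}\cup\{g\}$ (using $\underline{V}=\underline{X}$), that $[X,+]\circ[V,+]=[X,+]$, and that $S([X,+],[V,+])=S(X,V)$, which is nonempty because $V\neq X$ and $\underline{V}=\underline{X}$. Next — and this is the one substantive point — I would note that $V\in B(X,-Y)$ means $V_{f}=X_{f}$ for all $f\notin S(X,-Y)$, whence no such $f$ lies in $S(X,V)$; that is, $S(X,V)\subseteq S(X,-Y)$. Then, picking any $e\in S(X,V)$ and applying $(O4)$ to $[X,+],[V,+]$ at the coordinate $e$, I obtain $Z\in\mathcal{O}$ with $Z_{e}=0$ and $Z_{f}=[X,+]_{f}$ for all $f\notin S(X,V)$; since $g\notin S(X,V)$ this gives $Z_{g}=+$, so $Z\in\mathcal{O}_{g^{+}}$ and $N:=Z|_{E}\in\mathcal{W}$.

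Finally I would verify $N\in I_{e}(X,-Y)$: for $f\notin S(X,-Y)$ we have $f\notin S(X,V)$, so $N_{f}=X_{f}$; and $N_{e}=0$ with $e\in S(X,-Y)\subseteq\underline{X}$, while $f\notin\underline{X}$ implies $f\notin S(X,-Y)$ and hence $N_{f}=X_{f}=0$, so $\underline{N}\subseteq\underline{X}-\{e\}$. Thus $N\in I_{e}(X,-Y)\cap\mathcal{W}\subseteq I(X,-Y)\cap\mathcal{W}$, contradicting the hypothesis, and the lemma follows. I expect the main (and really only) obstacle to be spotting that membership of $V$ in $B(X,-Y)$ is precisely the hypothesis needed to force $S(X,V)\subseteq S(X,-Y)$ — which is what makes the eliminated covector fall into the elimination set taken relative to $-Y$ rather than merely relative to $V$; everything else (tracking the coordinate $g$, checking that the support genuinely drops at $e$) is routine bookkeeping. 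If one has already recorded that affine oriented matroids satisfy $(A2)$, the same idea applies $(A2)$ directly to $X$ and $V$ and uses $I_{e}(X,V)\subseteq I_{e}(X,-Y)$, with no reference to $\mathcal{O}$ at all.
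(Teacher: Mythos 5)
Your proposal is correct and follows essentially the same route as the paper: take $V\in B(X,-Y)\cap\mathcal{W}$, observe that $S(X,V)\subseteq S(X,-Y)$ so that $I(X,V)\subseteq I(X,-Y)$, and apply elimination to $X$ and $V$ to produce an element of $I(X,-Y)\cap\mathcal{W}$, a contradiction. The only difference is that the paper compresses the lifting step into the remark that $\mathcal{W}$ "inherits $(O4)$" from $\mathcal{O}$, whereas you spell out the lift to $[X,+],[V,+]\in\mathcal{O}$ and track the $g$-coordinate explicitly (and also note the $(A2)$ shortcut at the end).
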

\begin{proof}
Suppose for a contradiction that $B(X,-Y) \cap \mathcal{W} \neq \emptyset$. For $Z \in B(X,-Y) \cap \mathcal{W}$ we have $S(X,Z) \neq \emptyset$ as $\underline{Z} = \underline{X}$ and $Z \neq X$. Moreover, we have $I(X,Z) \subseteq I(X,-Y)$ since $Z$ shares the same sign with $X$ and $-Y$ at each non-separation coordinate. On the other hand, it is easy to see that $\mathcal{W}$ inherits $(O4)$ from its oriented matroid. Thus we conclude that $ I(X,Z) \cap \mathcal{W} \neq \emptyset$, contradicting $I(X,-Y) \cap \mathcal{W} = \emptyset$.
\end{proof}

\begin{Prop}\label{N zerlegung}
If $\mathcal{W}$ is an affine oriented matroid, then $\mathcal{N}(\mathcal{W}) = \sym(\mathcal{W}) \text{ } \dot{\cup} \text{ } \mathcal{P}(\mathcal{W})$.
\end{Prop}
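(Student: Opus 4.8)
The plan is to move to the ambient oriented matroid $\mathcal{O}=\mathcal{W}^{\dagger}$ on $E\cup\{g\}$ and read off all three sets from $\mathcal{O}$: by Lemma~\ref{N property}, $N\in\mathcal{N}(\mathcal{W})\iff[N,0]\in\mathcal{O}$, while by Definition~\ref{AOM} and $(O2)$, $V\in\mathcal{W}\iff[V,+]\in\mathcal{O}$ and $-V\in\mathcal{W}\iff[V,-]\in\mathcal{O}$. Two preliminary facts are needed. (a) $\mathcal{W}$ satisfies $(O4)$ (hence $(O4')$): for $X,Y\in\mathcal{W}$ with $\underline{X}=\underline{Y}$ and $e\in S(X,Y)$, apply $(O4)$ in $\mathcal{O}$ to $[X,+]$ and $[Y,+]$ (whose separation set does not contain $g$) and restrict the eliminant to $E$ — this is the observation already used in the proof of Lemma~\ref{B property}. (b) $\mathcal{N}(\mathcal{W})\cap\mathcal{W}=\sym(\mathcal{W})$: if $V,-V\in\mathcal{W}$ then $[V,\pm]\in\mathcal{O}$ and strong elimination at $g$ gives $[V,0]\in\mathcal{O}$, so $V\in\mathcal{N}(\mathcal{W})$; conversely, if $V\in\mathcal{W}\cap\mathcal{N}(\mathcal{W})$ then $-V=(-V)\circ V\in(-V)\circ\mathcal{W}\subseteq\mathcal{W}$. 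Since $\mathcal{N}(\mathcal{W})$ is symmetric, (b) shows that every $N\in\mathcal{N}(\mathcal{W})$ lies either in $\sym(\mathcal{W})$ or in $\mathcal{N}(\mathcal{W})\setminus\mathcal{W}$ (with $-N$ also outside $\mathcal{W}$). Hence $\mathcal{N}(\mathcal{W})=\sym(\mathcal{W})\,\dot{\cup}\,(\mathcal{N}(\mathcal{W})\setminus\mathcal{W})$, and it remains only to prove $\mathcal{P}(\mathcal{W})=\mathcal{N}(\mathcal{W})\setminus\mathcal{W}$.

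For the inclusion $\mathcal{P}(\mathcal{W})\subseteq\mathcal{N}(\mathcal{W})\setminus\mathcal{W}$, let $P=X+(-Y)$ with $X,Y\in\asym(\mathcal{W})$, $\underline{X}=\underline{Y}$, and $I(X,-Y)\cap\mathcal{W}=I(-X,Y)\cap\mathcal{W}=\emptyset$. First $S(X,-Y)\neq\emptyset$ (otherwise $X=-Y$, forcing $-X\in\mathcal{W}$), and $P$ is the $\preceq$-minimal element of $I(X,-Y)$, so $P\notin\mathcal{W}$. To get $[P,0]\in\mathcal{O}$ I use strong elimination in $\mathcal{O}$ from $[X,+]$ and $[-Y,-]$: these share the support $F:=\underline{X}\cup\{g\}$, have separation set $S(X,-Y)\cup\{g\}$, and $[P,0]$ is their $\preceq$-minimal sum, i.e.\ the $\preceq$-minimal element of $I([X,+],[-Y,-])$. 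Choose $Z\in\mathcal{O}\cap I([X,+],[-Y,-])$ of minimal support; then $Z$ agrees with $X$ on $\underline{P}:=\underline{X}\setminus S(X,-Y)$ and vanishes off $F$, so $Z$ is determined by $Z_g$ and $Z|_{S(X,-Y)}$. A trichotomy on $Z_g$ shows $Z=[P,0]$: if $Z_g=+$ then $Z|_E\in\mathcal{W}$ lies, by Remark~\ref{Reformulate}, in $\{X,-Y\}\cup B(X,-Y)\cup I(X,-Y)$, which is impossible since $I(X,-Y)\cap\mathcal{W}=\emptyset$, $B(X,-Y)\cap\mathcal{W}=\emptyset$ (Lemma~\ref{B property}), $-Y\notin\mathcal{W}$, and $Z|_E=X$ would force $\underline{Z}=F$; the case $Z_g=-$ is the same after passing to $-Z$, now invoking Lemma~\ref{B property} for the pair $(Y,X)$ (legitimate, since $I(Y,-X)=I(-X,Y)$, $B(Y,-X)=B(-X,Y)$, and $X,Y\in\mathcal{W}$) to get $B(-X,Y)\cap\mathcal{W}=\emptyset$. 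Hence $Z_g=0$ and $Z|_E\in\mathcal{N}(\mathcal{W})$; if $Z|_E\neq P$, pick $e_0\in S(X,-Y)$ with $(Z|_E)_{e_0}\neq0$ and compose $Z|_E$ and $-(Z|_E)$ (both in $\mathcal{N}(\mathcal{W})$) with $X\in\mathcal{W}$: the two products lie in $\mathcal{W}$ and, by the same trichotomy applied to the pairs $(X,-Y)$ and $(-X,Y)$, must equal $X$ and $Y$ respectively, and evaluating both at $e_0$ forces $X_{e_0}=-X_{e_0}$, absurd. Thus $[P,0]=Z\in\mathcal{O}$, i.e.\ $P\in\mathcal{N}(\mathcal{W})$.

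For the reverse inclusion, let $N\in\mathcal{N}(\mathcal{W})\setminus\mathcal{W}$ (so $-N\notin\mathcal{W}$ too). The set $\{X'\in\mathcal{W}:N\preceq X'\}$ is nonempty — $X'=N\circ(T_0|_E)$ works for any tope $T_0$ of $\mathcal{O}$ with $(T_0)_g=+$, using $[N,0]\in\mathcal{O}$ — so choose $X$ in it with $\underline{X}$ minimal and put $Y:={}_{-\underline{N}}X=(-N)\circ X$. Then $\underline{Y}=\underline{X}$, $Y\in\mathcal{W}$ (as $N\in\mathcal{N}(\mathcal{W})$), $N=X+(-Y)$, and $S(X,-Y)=\underline{X}\setminus\underline{N}\neq\emptyset$ because $N\neq X$. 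Minimality gives $I(X,-Y)\cap\mathcal{W}=\{V\in\mathcal{W}:N\preceq V,\ \underline{V}\subsetneq\underline{X}\}=\emptyset$ at once; and if some $V\in\mathcal{W}$ had $-N\preceq V$, $\underline{V}\subsetneq\underline{X}$, then $N\circ V\in\mathcal{W}$ would satisfy $N\preceq N\circ V$ with $\underline{N\circ V}=\underline{V}\subsetneq\underline{X}$, again contradicting minimality, so $I(-X,Y)\cap\mathcal{W}=\emptyset$ too. Finally $X,Y\in\asym(\mathcal{W})$: if $-X\in\mathcal{W}$ then $-Y=N\circ(-X)\in\mathcal{W}$ as well, so $X,-Y\in\mathcal{W}$ would be distinct with equal support, whence $(O4)$ for $\mathcal{W}$ forces $I(X,-Y)\cap\mathcal{W}\neq\emptyset$, contradicting the previous line; and $-Y\in\mathcal{W}$ forces $-X=(-N)\circ(-Y)\in\mathcal{W}$, reducing to the case just treated. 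Hence $N=X+(-Y)\in\mathcal{P}(\mathcal{W})$.

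Putting the two inclusions together with fact (b) gives $\mathcal{N}(\mathcal{W})=\sym(\mathcal{W})\,\dot{\cup}\,\mathcal{P}(\mathcal{W})$, disjointly, since $\sym(\mathcal{W})\subseteq\mathcal{W}$ while $\mathcal{P}(\mathcal{W})\cap\mathcal{W}=\emptyset$. The one genuinely delicate step is the minimal-support case analysis in the second paragraph establishing $[P,0]\in\mathcal{O}$; everything else is routine bookkeeping with the sets $I(\cdot,\cdot)$ and $B(\cdot,\cdot)$ via Remark~\ref{Reformulate}, Lemma~\ref{B property}, and the fact that $\mathcal{W}$ itself satisfies $(O4)$.
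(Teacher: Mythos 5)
Your proof is correct and takes essentially the same approach as the paper's: establish $\mathcal{N}(\mathcal{W}) \cap \mathcal{W} = \sym(\mathcal{W})$, then prove $\mathcal{P}(\mathcal{W}) = \mathcal{N}(\mathcal{W}) \setminus \mathcal{W}$ via strong elimination of $[X,+]$ against $[-Y,-]$ at $g$ in $\mathcal{O}$ (forward) and a support-minimality argument producing $X=N\circ V$, $Y=(-N)\circ V$ (backward). The remaining differences are cosmetic bookkeeping — you pick a general eliminant from $I([X,+],[-Y,-])$ and deduce $Z_g=0$ by trichotomy where the paper takes $Z\in I_g$ directly, and you minimize $|\underline{X}|$ over $\{X'\in\mathcal{W}: N\preceq X'\}$ where the paper minimizes $|\underline{V}-\underline{N}|$ over $V\in\mathcal{W}$, which are equivalent.
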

\begin{proof}
Let $\mathcal{O}$ be the oriented matroid of $\mathcal{W}$.
First we claim that it suffices to verify the following four properties\vspace{0.1cm}\\
$(1)\text{ }\mathcal{W} \cap \mathcal{N}(\mathcal{W}) = \sym(\mathcal{W})$,\vspace{0.1cm}\\
$(2)\text{ }\mathcal{P}(\mathcal{W}) \subseteq \mathcal{N}(\mathcal{W})$,\vspace{0.1cm}\\
$(3)\text{ }\mathcal{N}(\mathcal{W}) \subseteq \sym(\mathcal{W}) \text{ } \cup \text{ } \mathcal{P}(\mathcal{W})$, and\vspace{0.1cm}\\
$(4)\text{ }\mathcal{P}(\mathcal{W}) \cap \sym(\mathcal{W}) = \emptyset$.\vspace{0.1cm}\\
Indeed, $(1)$, $(2)$ and $(4)$ imply that $\sym(\mathcal{W}) \text{ }\dot{\cup}\text{ } \mathcal{P}(\mathcal{W}) \subseteq \mathcal{N}(\mathcal{W})$, and together with $(3)$ this yields the desired equality.
\vspace{0.5cm}

$(1)$
Let $V \in \mathcal{W} \cap \mathcal{N}(\mathcal{W})$. Then 
\begin{align*}
[V,+],[V,0] \in \mathcal{O}
&\stackrel{(O2)}{\Rightarrow} [-V,-] \in \mathcal{O}\\
&\stackrel{(O3)}{\Rightarrow} [V,0] \circ [-V,-] = [V,-] \in \mathcal{O}\\
&\stackrel{(O2)}{\Rightarrow} [-V,+] \in \mathcal{O}\\
&\Rightarrow -V \in \mathcal{W}\\
&\Rightarrow V \in \sym(\mathcal{W}).
\end{align*}

Conversely, let $V \in \sym(\mathcal{W}) \subseteq \mathcal{W}$. Then we have
\begin{align*}
[\pm V,+] \in \mathcal{O}
&\stackrel{(O3)}{\Rightarrow} [V,-] \in \mathcal{O}\\
&\stackrel{(O4)}{\Rightarrow} [V,0] \in \mathcal{O}\\
&\Rightarrow V \in \mathcal{W} \cap \mathcal{N}(\mathcal{W}).
\end{align*}

$(2)$
Let $P=X+(-Y) \in \mathcal{P}(\mathcal{W})$. Then $[X,+],[Y,+],[-Y,-] \in \mathcal{O}$. Thus $I_g([X,+],[-Y,-]) \cap \mathcal{O} \neq \emptyset$.
Pick some $[Z,0] \in I_g([X,+],[-Y,-]) \cap \mathcal{O}$. We claim that $P = Z$.
Since $Z$ shares the same sign with $X$ and $-Y$ at each non-separation coordinate, we have $Z \in B(X,-Y) \text{ }\dot{\cup}\text{ } I(X,-Y) \text{ }\dot{\cup}\text{ } \{X,-Y\}$. One can argue that $Z \in I(X,-Y)$ as follows.

Notice that Lemma \ref{N property} implies $Z \in \mathcal{N(W)}$.
Hence, if $Z$ or $-Z$ is a member of $\mathcal{W}$ then it is contained in $\sym(\mathcal{W})$, and hence $Z \notin \{X,-Y\}$.
And $Z$ is not contained in $B(X,-Y)$ since otherwise we would have $Z = Z \circ X \in \mathcal{W}$,
meaning that $B(X,-Y) \cap \mathcal{W} \neq \emptyset$, which would contradicting Lemma \ref{B property}. Therefore, $Z$ must be a member of $I(X,-Y)$.

In other words $P = X + (-Y) \preceq Z$. It suffices to show $Z \circ X = X$ and $Z \circ (-Y) = -Y$.
By Lemma 2.2, $Z \circ X$ and $(-Z) \circ Y$ are members of $\mathcal{W}$. Since $B(X,-Y) \cap \mathcal{W} = B(-X,Y) \cap \mathcal{W} = \emptyset$,
one has $Z \circ X \notin B(X,-Y)$ and $(-Z) \circ Y \notin B(-X,Y)$. The latter is equivalent to $Z \circ (-Y) \notin B(X,-Y)$.
Note that both of  $Z \circ X$ and $Z \circ (-Y)$ have the same support as $X$ and share the same sign with $X$ and $-Y$ at each non-separation coordinate as $Z$  does. Therefore, $Z \circ X$ and $Z \circ (-Y)$ must be contained in $\{X,-Y\}$. Since $Z \in I(X,-Y)$, we have $\underline{Z} \subset \underline{X}$ and thus $Z \circ X \neq -Y$,
implying $Z \circ X = X$ and similarly $Z \circ (-Y) = -Y$.

$(3)$
Let $N \in \mathcal{N}(\mathcal{W}) - \sym(\mathcal{W})$.
It suffices to show $N \in \mathcal{P}(\mathcal{W})$.
Choose $V \in \mathcal{W}$ such that $|\underline{V} - \underline{N}|$ is minimal.
Note that for all $V \in \mathcal{W}$ we have $\underline{V} \not\subseteq \underline{N}$. Otherwise, by definition of $\mathcal{N(W)}$ we have $N = N \circ V \in \mathcal{W} \cap \mathcal{N}(\mathcal{W}) = \sym(\mathcal{W})$, contradicting $N \notin \sym(\mathcal{W})$. 

Let $X = N \circ V$ and $Y = (-N) \circ V$. Then $X,Y \in \mathcal{W}$ by definition of $\mathcal{N(W)}$, and we get $N = X + (-Y)$. In order to show $X + (-Y) \in \mathcal{P}(\mathcal{W})$, it suffices to show $X \in \asym(\mathcal{W})$ and $I(X,-Y) \cap \mathcal{W} = \emptyset$ since the argument for $Y \in \asym(\mathcal{W})$ and $I(-X,Y) \cap \mathcal{W} = \emptyset$ is similar.

Suppose for a contradiction that $I(X,-Y) \cap \mathcal{W} \neq \emptyset$, say $Z \in I(X,-Y) \cap \mathcal{W}$ with $Z_e = 0$ for some $e \in S(X,-Y)$.
We claim that $|\underline{Z} - \underline{N}|$ is less than $|\underline{V} - \underline{N}|$.
Indeed, $\underline{Z}$ is a proper subset of $\underline{X}$ as $Z \in I(X,-Y)$. This implies $|\underline{Z} - \underline{N}| \le |\underline{X} - \underline{N}| = |\underline{V} - \underline{N}|$.
Furthermore, $e$ is a member of $S(X,-Y)$ and thus not contained in $S(X,Y) = \underline{N}$, therefore $|\underline{Z} - \underline{N}| < |\underline{V} - \underline{N}|$, contradicting minimality.

If $X \in \sym(\mathcal{W})$, then $-X$ must belong to $\mathcal{W}$. By definition of $\mathcal{N(W)}$, we have $N \circ (-X) = -Y \in \mathcal{W}$, which implies $I(X,-Y) \cap \mathcal{W} \neq \emptyset$ by $(O4)$ and leads to a contradiction as in the argument above.

$(4)$
It is easy to see that every $P = X + (-Y) \in \mathcal{P}(\mathcal{W})$ is a member of the elimination set $I(X,-Y)$. Therefore, we conclude that $\mathcal{W} \cap \mathcal{P}(\mathcal{W}) = \emptyset$ and (4) holds as an immediate consequence.  
\end{proof}

\begin{Bem}\label{remark suff}
If $\mathcal{W}$ is an affine oriented matroid obtained from some oriented matroid $\mathcal{O}$, then $(A1)$ and $(A2)$ are inherited from $\mathcal{O}$. By Proposition 2.1 we have $\mathcal{P}(\mathcal{W}) \subseteq \mathcal{N}(\mathcal{W})$, and $(A3)$ follows from Lemma \ref{N property}.
\end{Bem}
\vspace{0.2cm}
This remark ensures sufficiency of the axiomatization. In order to see necessity, yet another technical lemma has to be established.
\vspace{0.2cm}
\begin{Lem}\label{Zauber}
Let $\mathcal{W} \subseteq \{+,-,0\}^E$ satisfy $(A1)$,$(A2)$ and $(A3)$. Further let $P = U + (-U') \in \mathcal{P}(\mathcal{W})$ with $U,U' \in \asym(\mathcal{W})$. Then $U$ conforms to every $Z \in \mathcal{W}$ with $\underline{Z} \subseteq \underline{U}$\vspace{0.2cm} at  $\underline{U} - \underline{P}$, that is,
\[Z_f=U_f \text{ for all } f \in \underline{U} - \underline{P}.\]
\end{Lem}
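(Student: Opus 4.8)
The plan is to funnel everything through the single auxiliary covector $W := P \circ Z$. By $(A3)$ we have $W \in \mathcal{W}$ (since $P \in \mathcal{P}(\mathcal{W})$ and $Z \in \mathcal{W}$), and I claim the lemma is equivalent to the statement $W = U$. So the whole proof reduces to showing $P \circ Z = U$.

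First I would unwind the combinatorics of the sum. Because $\underline{U} = \underline{U'}$, reading off the definition of $P = U + (-U')$ gives $P_f = U_f$ for $f \notin S(U,-U')$ and $P_f = 0$ for $f \in S(U,-U')$; hence $\underline{P} = \underline{U} - S(U,-U')$, and therefore the index set in the statement is exactly $\underline{U} - \underline{P} = S(U,-U')$. Computing the composition $W = P \circ Z$ and using $\underline{Z} \subseteq \underline{U}$, one gets $W_f = U_f$ for every $f \notin S(U,-U')$, together with $\underline{W} \subseteq \underline{U}$. Consequently $W = U$ says precisely that $Z_f = U_f$ for all $f \in S(U,-U')$, which is what we want.

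Next I would prove $W = U$ by contradiction, splitting on the support of $W$. Note first that $U \neq -U'$ (otherwise $U$ and $-U = U'$ would both lie in $\mathcal{W}$, contradicting $U \in \asym(\mathcal{W})$), so the sets $I_e(U,-U')$ are defined, and $P \in \mathcal{P}(\mathcal{W})$ supplies $I(U,-U') \cap \mathcal{W} = \emptyset$. \emph{Case 1: $\underline{W} \subsetneq \underline{U}$.} Since $W$ agrees with $U$ off $S(U,-U')$ and $U$ is nonzero throughout $\underline{U}$, some $e \in S(U,-U')$ is missing from $\underline{W}$; but then $W$ satisfies the defining conditions of $I_e(U,-U')$, so $W \in I(U,-U') \cap \mathcal{W}$, a contradiction. \emph{Case 2: $\underline{W} = \underline{U}$ but $W \neq U$.} Then $S(U,W) \neq \emptyset$, and since $W$ agrees with $U$ off $S(U,-U')$ we get $S(U,W) \subseteq S(U,-U')$; picking $e \in S(U,W)$ and applying $(A2)$ to the pair $U,W \in \mathcal{W}$ (which have equal support) yields some $V \in I_e(U,W) \cap \mathcal{W}$. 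One checks $I_e(U,W) \subseteq I_e(U,-U')$ — the larger separation set $S(U,-U')$ imposes the sign constraint on fewer coordinates — so again $V \in I(U,-U') \cap \mathcal{W}$, a contradiction. Hence $W = U$.

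The only delicate point, and the main thing to get right rather than a genuine obstacle, is the bookkeeping verifying that $W$ (in Case 1), respectively the eliminant $V$ (in Case 2), really lands in $I_e(U,-U')$: one must confirm that composing with $P$, and eliminating a coordinate of $S(U,W)$, never disturbs the sign of $U$ at a non-separation coordinate of the pair $(U,-U')$, and that the relevant index genuinely lies in $S(U,-U')$. I would also emphasize that this argument uses only $(A2)$ and $(A3)$ — it deliberately avoids Lemma~\ref{B property} and Proposition~\ref{N zerlegung}, which presuppose that $\mathcal{W}$ is already an affine oriented matroid; their role is played here directly by $(A2)$, the $I_e$-form of $(O4)$ for $\mathcal{W}$. (Incidentally, only the condition $I(U,-U') \cap \mathcal{W} = \emptyset$ from the definition of $\mathcal{P}(\mathcal{W})$ is used, not its mirror $I(-U,U') \cap \mathcal{W} = \emptyset$.)
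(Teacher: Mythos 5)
Your proof is correct and takes a genuinely cleaner route than the paper. The paper argues by a five-way case split (a)--(e) on the support of $Z$ relative to $\underline{P}$ and the signs of $Z$ on $\underline{U}-\underline{P}$, using a different contradiction in each case and reducing (e) to the others via $P\circ Z$. You instead observe up front that the conclusion is equivalent to the single identity $P\circ Z=U$, which collapses the case analysis to the dichotomy $\underline{P\circ Z}\subsetneq\underline{U}$ versus $\underline{P\circ Z}=\underline{U}$ with $P\circ Z\neq U$; in either case you produce a member of $I(U,-U')\cap\mathcal{W}$, contradicting the defining property of $\mathcal{P}(\mathcal{W})$. What this buys is a single, uniform contradiction mechanism (the paper invokes three: $\mathcal{P(W)}\cap\mathcal{W}=\emptyset$ for (a), $U'\in\asym(\mathcal{W})$ for (c), and $(A2)$ plus the $I$-condition for (d)), the reduction to $P\circ Z$ built in from the start rather than appended as a final case (e), and a case split that is manifestly exhaustive. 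Indeed the paper's cases (b)--(d) as literally stated do not cover the situation $\underline{P}\subsetneq\underline{Z}\subsetneq\underline{U}$ with $Z$ vanishing at some coordinate of $\underline{U}-\underline{P}$, where the paper's claim in case (d) that $\underline{P\circ Z}=\underline{U}$ would fail; your Case 1 absorbs that scenario automatically. Your parenthetical remark that only $I(U,-U')\cap\mathcal{W}=\emptyset$ is used, not its mirror, is accurate and matches the paper.
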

\begin{proof}
For such a member of $\mathcal{W}$ there are five possibilities\vspace{0.2cm}\\
(a) $\underline{Z} \subseteq \underline{P}$,\\
(b) $\underline{P} \subseteq \underline{Z}$ and for all $f \in \underline{U} - \underline{P}$ we have $Z_f=U_f$,\\
(c) $\underline{P} \subseteq \underline{Z}$ and for all $f \in \underline{U} - \underline{P}$ we have $Z_f=-U_f$,\\
(d) $\underline{P} \subseteq \underline{Z}$ and there exist $f,h \in \underline{U} - \underline{P}$ such that $Z_f=U_f$ and $Z_h=-U_h$,\\
(e) $\underline{Z} \not\subseteq \underline{P}$ and $\underline{P} \not\subseteq \underline{Z}$.\vspace{0.2cm}\\
We need to show that all cases other than (b) lead to case (b) eventually, or to a contradiction. Note that the case (e) was overlooked (or deemed to be trivial) in the original proof by Karlander.

For (a) notice that $P = P \circ Z \in \mathcal{W}$ by $(A3)$. We thus have $\mathcal{P}(\mathcal{W}) \cap \mathcal{W} \neq \emptyset$, contradicting the proof of Proposition \ref{N zerlegung}{.(4)}.

As to (c), note that $\underline{U} - \underline{P}$ comprises exactly those coordinates at which $U$ differs from $-U'$ because $P = U + (-U')$. Therefore $P \circ Z= -U'$, and by $(A3)$ we conclude  $-U' = P \circ Z \in \mathcal{W}$, which however contradicts that $U' \in \asym(\mathcal{W})$.

For (d) observe that for every $f \notin \underline{U} - \underline{P} = S(U,-U')$ we have $(P \circ Z)_f = P_f = U_f = -U'_f$, meaning that $P \circ Z \in \{U,-U'\} \cup I(U,-U') \cup B(U,-U')$ by Remark 2.1.

On the one hand, we have $\underline{P \circ Z} = \underline{U}$ as in the case (c), whence $P \circ Z \notin I(U,-U')$.
On the other hand, by assumption neither $U$ nor $-U'$ conforms to $P \circ Z$.
Hence $P \circ Z \notin \{U,U'\}$, so that $P \circ Z$ is a member of $B(U,-U')$ and hence we have $I(U,P \circ Z) \cap \mathcal{W} \neq \emptyset$ by $(A2)$.
Since  $P \circ Z$ shares the same sign with $U$ and $-U'$ at each non-separation coordinate, we have $I(U,P \circ Z) \subseteq I(U,-U')$.
This implies $I(U,-U') \cap \mathcal{W} \neq \emptyset$, contradicting the definition of $\mathcal{P(W)}$.

For (e) observe that $\underline{P} \subseteq \underline{P \circ Z} \subseteq \underline{U}$ and $P \circ Z$ has the same sign as $Z$ at $\underline{U} - \underline{P}$. Applying the above proof to $P \circ Z$,
we obtain the property ${(P \circ Z)}_f=U_f$ for all $f \in \underline{U} - \underline{P}$. Since $P_f =0$ for all $f \notin \underline{P}$, we get ${(P \circ Z)}_f=Z_f$ for such $f$ and in particular for those $f \in \underline{U} - \underline{P}$, meaning that $Z$ has also the desired property.
\end{proof}
\vspace{0.2cm}
Note that $Z$ in the above context must be contained in $\asym(\mathcal{W})$.
Suppose for a contradiction that $-Z$ is also a member of $\mathcal{W}$.
Then we could apply Lemma \ref{Zauber} to $-Z$ and conclude that $U_f = Z_f = -Z_f$ for all $f \in \underline{U} - \underline{P}$. This is an evident contradiction as $U_f \neq 0$.

\section{The Proof of Karlander's Main Theorem repaired}
Having established necessity in the main theorem by Remark \ref{remark suff}, it remains to prove suffciency.

\begin{Lem}
Let $\mathcal{W} \subseteq \{+,-,0\}^E$ satisfy $(A1)$, $(A2)$, and $(A3)$. Then $\mathcal{W}$ is an affine oriented matroid.
\end{Lem}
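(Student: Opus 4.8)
The plan is to reverse the construction of Definition~\ref{AOM}. Set $\mathcal{N}=\mathcal{N}(\mathcal{W})$ and define
\[\mathcal{W}^{\dagger}=\{[W,+]:W\in\mathcal{W}\}\cup\{[-W,-]:W\in\mathcal{W}\}\cup\{[N,0]:N\in\mathcal{N}\},\]
the same $\mathcal{W}^{\dagger}$ already considered in Section~2. It suffices to prove that $\mathcal{W}^{\dagger}$ is an oriented matroid on $E\cup\{g\}$: the covectors of $\mathcal{W}^{\dagger}$ with sign $+$ at $g$ are exactly the $[W,+]$ with $W\in\mathcal{W}$, so $(\mathcal{W}^{\dagger})_{g^{+}}(E)=\mathcal{W}$ and $\mathcal{W}$ is an affine oriented matroid.

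First I would redevelop, from $(A1)$, $(A2)$, $(A3)$ alone, the properties of $\mathcal{N}$ that Section~2 proved only for genuine affine oriented matroids. From $(A1)$ one gets $\sym(\mathcal{W})\subseteq\mathcal{N}$; since $+$ is commutative and, for $\underline{X}=\underline{Y}$, the set $I(X,-Y)$ does not depend on the order of its arguments, $\mathcal{P}(\mathcal{W})$ is symmetric, so $(A3)$ yields $\mathcal{P}(\mathcal{W})\subseteq\mathcal{N}$; and $\mathcal{N}$ is symmetric and, by associativity of composition, closed under composition. The substantive points are the analogue $\mathcal{N}=\sym(\mathcal{W})\,\dot{\cup}\,\mathcal{P}(\mathcal{W})$ of Proposition~\ref{N zerlegung}, whose proof transcribes that of Proposition~\ref{N zerlegung} with each appeal to the elimination axiom of the ambient oriented matroid replaced by $(A2)$, and the fact that $\mathcal{N}$ itself satisfies the weak elimination axiom $(O4')$. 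The latter is where the real work lies, and where Lemma~\ref{Zauber} — whose case (e) is the configuration Karlander overlooked — enters: it pins down the sign vectors of $\mathcal{W}$ lying inside the support of a prospective parallel vector, which is what forces the relevant composites back into $\mathcal{N}$.

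With this groundwork, $(O1)$ and $(O2)$ for $\mathcal{W}^{\dagger}$ are immediate, and $(O3)$ is a nine-fold check on the signs at $g$: compositions of two covectors with nonzero sign at $g$ reduce, after at most one application of $(O2)$, to $(A1)$; compositions with a factor $[N,0]$ use the definition of $\mathcal{N}$ and its closure under composition, the only point requiring comment being $[W,+]\circ[N,0]=[W\circ N,+]$, where $W\circ N=W\circ(N\circ W)\in\mathcal{W}$ by $(A1)$. For the elimination axiom it is cleanest to verify $(O4')$ and then invoke Lemma~\ref{equi}. Given $P,Q\in\mathcal{W}^{\dagger}$ with $\underline{P}=\underline{Q}$ and $P\neq Q$, I split on their signs at $g$: if both are $0$ the claim is $(O4')$ for $\mathcal{N}$; if both are $+$ it is $(A2)$ for $\mathcal{W}$; if both are $-$ it is $(A2)$ after sign reversal; and in the remaining case $P=[W,+]$, $Q=[-W',-]$ with $\underline{W}=\underline{W'}$, a witness $[Z,+]$ or $[Z,-]$ is furnished by $(A2)$ as soon as $W$ or $W'$ lies in $\sym(\mathcal{W})$ or some elimination set of $W$ and $-W'$ already meets $\mathcal{W}$, while in the contrary case $W,W'\in\asym(\mathcal{W})$ with $I(W,-W')\cap\mathcal{W}=I(-W,W')\cap\mathcal{W}=\emptyset$, so $W+(-W')\in\mathcal{P}(\mathcal{W})\subseteq\mathcal{N}$ and $[W+(-W'),0]$ is the desired witness.

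The main obstacle is the structural analysis of $\mathcal{N}(\mathcal{W})$ from the bare axioms — above all, verifying that $\mathcal{N}$ satisfies $(O4')$. This is precisely the step that breaks down in Karlander's manuscript, and it is exactly what Lemma~\ref{Zauber}, together with the case he missed, is meant to fix; everything else is either formal or a faithful transcription of arguments from Section~2.
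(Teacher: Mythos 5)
Your proposal takes essentially the same route as the paper: construct $\mathcal{W}^\dagger$ from $\mathcal{W}$ and $\mathcal{N}(\mathcal{W})$, verify $(O1)$--$(O3)$ and $(O4')$, and split $(O4')$ according to the sign at $g$. Your reorganization — establishing $\mathcal{N}(\mathcal{W})=\sym(\mathcal{W})\,\dot{\cup}\,\mathcal{P}(\mathcal{W})$ from $(A1)$--$(A3)$ up front, so that the $[N,0]\circ[N',0]$ case of $(O3)$ reduces to the trivial closure of $\mathcal{N}(\mathcal{W})$ under composition — is sound and in fact a cleaner presentation than the paper's, which folds the axiomatic re-derivation of the inclusion $\mathcal{N}(\mathcal{W})\subseteq\sym(\mathcal{W})\cup\mathcal{P}(\mathcal{W})$ into Case~2 of $(O3)$. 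Your treatment of $(O1)$, $(O2)$, and of $(O4')$ with at least one covector of nonzero $g$-sign also matches the paper.

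The gap is in the one genuinely hard step, weak elimination between $[N_1,0]$ and $[N_2,0]$. You correctly flag it as "where the real work lies" and point to Lemma~\ref{Zauber}, but you do not carry it out, and it cannot be waved away as a transcription: this is precisely the step where Karlander's manuscript breaks, and repairing it is the entire content of the paper. Concretely, your proposal is missing: the observation that $N_1,N_2$ are necessarily both in $\sym(\mathcal{W})$ or both in $\mathcal{P}(\mathcal{W})$; the introduction of the auxiliary set $\mathcal{A}(Z_1,Z_2)$ of $\asym(\mathcal{W})$-pairs inside $I(Z_1,Z_2)$ and the proof that it is non-empty via the compositions $B_1=A_1\circ A_2$, $B_2=A_2\circ A_1$ built from $(A2)$-witnesses; the detour of replacing $N_1,N_2\in\mathcal{P}(\mathcal{W})$ by $N_1\circ U,\;N_2\circ U\in\mathcal{W}$ so that $(A2)$ can be applied at all (the point of failure exposed in Lemma~\ref{gegenbsp}); the choice of a pair with minimal separation set; and the final verification that $X+(-Y)$ lies in $I(N_1,N_2)\cap\mathcal{P}(\mathcal{W})$, where Lemma~\ref{Zauber}, including its case~(e), is invoked repeatedly to control supports and signs. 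Until these are supplied, what you have is a faithful outline of the paper's argument rather than a proof.
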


\begin{proof}
Recall the definition of  $\mathcal{W}^\dagger = \{[W,+] : W \in \mathcal{W}\} \cup \{[-W,-] : W \in \mathcal{W}\} \cup \{[N,0] : N \in \mathcal{N}(\mathcal{W})\}$. By Remark 2.2 and Lemma 2.1, it suffices to show that $\mathcal{W}^\dagger$ satisfies $(O1)$, $(O2)$, $(O3)$, and $(O4')$.
\vspace{0.2cm}

As to $(O1)$, choose $V \in \mathcal{W}$ with minimal support.
If $V = (\emptyset,\emptyset) = -V$, then $V \in \sym(\mathcal{W})$ and thus $[V,0] \in  \mathcal{W}^\dagger$.
Suppose that $\underline{V} \neq \emptyset$. 
Observe that the set $I(V,-V) \cap \mathcal{W}$ must be empty, for otherwise, every member of $I(V,-V) \cap \mathcal{W}$ has a smaller support than $V$, contradicting minimality.
$V$ must actually be a member of $\asym(\mathcal{W})$, since otherwise we could apply $(A2)$ to the pair $V,-V$ to find a member of $I(V,-V)$. Thus we conclude that $(\emptyset,\emptyset) = V + (-V) \in \mathcal{P(W)}$ and consequently $\mathcal{W}^\dagger$ satisfies $(O1)$.

It is easy to see that $\mathcal{P}(\mathcal{W})$ is symmetric. Hence $\mathcal{W}^\dagger$ satisfies $(O2)$.

As for $(O3)$ let $V_1,V_2 \in \mathcal{W}^\dagger$. If $V_1,V_2 \notin \{[N,0] : N \in \mathcal{N}(\mathcal{W})\}$, the axiom $(O3)$ is a direct consequence of $(A1)$. We can therefore assume that $V_1 = [N,0]$ for some $N \in \sym(\mathcal{W}) \cup \mathcal{P}(\mathcal{W})$.\vspace{0.2cm}\\
Case 1 of $(O3)$: $V_2 \in \{[W,+] : W \in \mathcal{W}\} \cup \{[-W,-] : W \in \mathcal{W}\}$.

Let $V \in \mathcal{W}$ correspond to $V_2$.
We claim that $V \circ (\pm N) \in \mathcal{W}$ and $(\pm N) \circ V \in \mathcal{W}$. By $(O2)$, this claim is sufficient for $V_1 \circ V_2 \in \mathcal{W}^\dagger$ and $V_2 \circ V_1 \in \mathcal{W}^\dagger$.

If $N \in \sym(\mathcal{W})$, the claim is a consequence of $(A1)$. Otherwise $N \in \mathcal{P}(\mathcal{W})$, by $(A3)$ and the symmetry of $\mathcal{P}(\mathcal{W})$ we have $(\pm N) \circ V \in \mathcal{W}$. Together with the equality \vspace{0.2cm}

$V \circ (\pm N) = (V \circ (\pm N)) \circ V = V \circ ((\pm N) \circ V)$,\vspace{0.2cm}\\
one concludes by $(A1)$ that $V \circ (\pm N) \in \mathcal{W}$.\vspace{0.2cm}\\
Case 2 of $(O3)$: $V_2 = [N',0]$ with $N' \in \sym(\mathcal{W}) \cup \mathcal{P}(\mathcal{W})$.

Then we have $N' \circ \mathcal{W} \subseteq \mathcal{W}$ either by $(A1)$ for those $N' \in \sym(\mathcal{W})$, or by $(A3)$ for those $N' \in \mathcal{P(W)}$. It is enough to show $T := N \circ N' \in \sym(\mathcal{W}) \cup \mathcal{P}(\mathcal{W})$. Assuming $T \notin\sym(\mathcal{W})$, we claim $T \in \mathcal{P}(\mathcal{W})$. Note the following inclusion \vspace{0.2cm}

$T \circ \mathcal{W} = (N \circ N') \circ \mathcal{W} = N \circ (N' \circ \mathcal{W}) \subseteq N \circ \mathcal{W} \subseteq \mathcal{W}$. $(*)$\vspace{0.2cm}\\
Using symmetry of $\sym(\mathcal{W}) \cup \mathcal{P}(\mathcal{W})$, one infers that $-T$ has the same property. 

Choose $V \in \mathcal{W}$ with the minimal $|\underline{V} - \underline{T}|$. The minimal cardinality must be larger than 0, otherwise there exists some $V \in \mathcal{W}$ such that $\pm T = (\pm T) \circ V \in \mathcal{W}$, contradicting the assumption $T \notin \sym(\mathcal{W})$.

Let $X = T \circ V$ and $Y = (-T) \circ V$, so that $T = X + (-Y)$. By $(*)$ we have $X,Y \in \mathcal{W}$ with $X \neq Y$ and $\underline{X} = \underline{Y}$. 
To establish our claim $T \in \mathcal{P(W)}$,
it remains to show $X,Y \in \asym(\mathcal{W})$ and $(I(X,-Y) \cup I(-X,Y)) \cap \mathcal{W} = \emptyset$.

Certainly, $-Y = T \circ (-X)$ and $-X = (-T) \circ (-Y)$. If $X \in \sym(\mathcal{W})$, we can apply $(*)$ to $T$ and $-X$ to conclude that $-Y \in \mathcal{W}$.
Since $|\underline{V} - \underline{T}| \neq 0$, we have $S(X,-Y) \neq \emptyset$, implying that $I(X,-Y) \cap \mathcal{W} \neq \emptyset$ by $(A2)$. Similarly, if $Y \in \sym(\mathcal{W})$, one concludes $I(-X,Y) \cap \mathcal{W} \neq \emptyset$.
Therefore, it suffices to show $(I(X,-Y) \cup I(-X,Y)) \cap \mathcal{W} = \emptyset$.

Suppose for a contradiction that $(I(X,-Y) \cup I(-X,Y)) \cap \mathcal{W}$ is not empty. Then the support of any $Z \in (I(X,-Y) \cup I(-X,Y)) \cap \mathcal{W}$ is a proper subset of support of $X$. By definition of $X,Y$ we have $S(X,-Y) = S(-X,Y) = \underline{V} - \underline{T}$, meaning that $Z \in \mathcal{W}$ has the property $|\underline{Z} - \underline{T}|<|\underline{V} - \underline{T}|$, contradicting minimality.

Finally, to establish $(O4')$, let $V_1,V_2 \in \mathcal{W}^\dagger$ with $V_1 \neq V_2$ and $\underline{V_1} = \underline{V_2}$. It is enough to show $I(V_1,V_2) \cap \mathcal{W}^\dagger \neq \emptyset$.\vspace{0.2cm}\\
Case 1 of $(O4')$: $V_1,V_2 \in \{[W,+] : W \in \mathcal{W}\}$ or $V_1,V_2 \in \{[-W,-] : W \in \mathcal{W}\}$.

In this case, $(O4')$ is a direct consequence of $(A2)$.\vspace{0.2cm}\\
Case 2 of $(O4')$: $V_1 = [X,+] \in \{[W,+] : W \in \mathcal{W}\}$ and $V_2 = [-Y,-] \in \{[-W,-] : W \in \mathcal{W}\}$.

If one of $X,Y$ is a member of $\sym(\mathcal{W})$, say $X$, then $[X,0] \in \mathcal{W}^\dagger$.
Observe that $[X,0]$ also belongs to $I([X,+],[-Y,-])$ where the last coordinate is forced to be $0$ by $(O4')$, whence $I([X,+],[-Y,-]) \cap \mathcal{W^\dagger} \neq \emptyset$.
The case $Y \in \sym(\mathcal{W})$ is similar and thus we conclude that $I(V_1,V_2) \cap \mathcal{W}^\dagger$ is not empty whenever at least one of $X$ and $Y$ belongs to $\sym(\mathcal{W})$. 

If $X,Y \in \asym(\mathcal{W})$ and there is some $Z \in I(X,-Y) \cap \mathcal{W}$, then $[Z,+] \in I([X,+],[-Y,-]) \cap \mathcal{W}^\dagger$ and we are done. The case for $I(-X,Y) \cap \mathcal{W} \neq \emptyset$ is similar.
Thus we can assume that $(I(X,-Y) \cup I(-X,Y)) \cap \mathcal{W} = \emptyset$.
Then $X + (-Y) \in \mathcal{P}(\mathcal{W})$ and consequently $[X + (-Y),0] \in I([X,+],[-Y,-]) \cap \mathcal{W}^\dagger$.

By the assumption $\underline{V_1} = \underline{V_2}$, it is not allowed to have exactly one of $V_1,V_2$ in $\{[N,0] : N \in \sym(\mathcal{W}) \cup \mathcal{P}(\mathcal{W})\}$. Thus the only remaining case is\vspace{0.2cm}\\
Case 3 of $(O4')$: $V_1,V_2 \in \{[N,0] : N \in \sym(\mathcal{W}) \cup \mathcal{P}(\mathcal{W})\}$, say $V_1 = [N_1,0]$ and $V_2 = [N_2,0]$.

Nor is it possible to have exactly one of $N_1, N_2$ in $\sym(\mathcal{W})$.
Otherwise, say $N_1 \in \sym(\mathcal{W})$ and $N_2 \in \mathcal{P}(\mathcal{W})$, we could apply $(A3)$ to get $N_2 = N_2 \circ N_1 \in \mathcal{W}$. This however contradicts $\mathcal{P}(\mathcal{W}) \cap \mathcal{W} = \emptyset$.

For any $Z_1,Z_2 \in \{+,-,0\}^E$ with $\underline{Z_1} = \underline{Z_2}$ we define\vspace{0.2cm}

$\mathcal{A}(Z_1,Z_2) = \{ (X,-Y) \in \{+,-,0\}^E: X,Y \in \asym(\mathcal{W}), \underline{X} = \underline{Y} \text{ and } X,-Y \in I(Z_1,Z_2)\}$\vspace{0.2cm}\\
Depending on the choice of $Z_1$ and $Z_2$, the set $\mathcal{A}(Z_1,Z_2)$ may be empty; but if it is not, then its members can be used to construct members of $\mathcal{P}(\mathcal{W})$.\vspace{0.2cm}\\
Case 3.1 of $(O4')$: $N_1,N_2 \in \sym(\mathcal{W})$.

If there exists some $U \in I(N_1,N_2) \cap \sym(\mathcal{W})$,
then $[U,0] \in \mathcal{W}^\dagger \cap I([N_1,0],[N_2,0])$ and we are done.
So let us assume $I(N_1,N_2) \cap \sym(\mathcal{W}) = \emptyset$ and show that $I(N_1,N_2) \cap \mathcal{P}(\mathcal{W}) \neq \emptyset$.
The plan is to pick some pair $(X,-Y)$ from $\mathcal{A}(N_1,N_2)$ with minimal separation set and prove that $X + (-Y)$ belongs to $I(N_1,N_2) \cap \mathcal{P}(\mathcal{W})$.
In order to witness $\mathcal{A}(N_1,N_2) \neq \emptyset$,
consider $e \in S(N_1,N_2)$.
By (A2) there exist\vspace{0.2cm}

$A_1 \in I_e(N_1,N_2) \cap \mathcal{W}$ \text{ } and \text{ } $A_2 \in I_e(-N_1,-N_2) \cap \mathcal{W}$.\vspace{0.2cm}\\
Let $B_1 = A_1 \circ A_2$ and $B_2 = A_2 \circ A_1$ and thus $\underline{B_1} = \underline{B_2}$ with $e \notin \underline{B_1}$.
Clearly, $B_1$ and $-B_2$ share the same sign with $N_1$ and $N_2$ at each non-separation coordinate as $A_1,-A_2$ do. 
Hence in view of $(A1)$ we conclude that $B_1,-B_2 \in I(N_1,N_2) \cap \mathcal{W}$. The assumption $I(N_1,N_2) \cap \sym(\mathcal{W}) = \emptyset$ thus implies that $B_1$ and $B_2$ belong to $\asym(\mathcal{W})$ and therefore $(B_1,-B_2) \in \mathcal{A}(N_1,N_2)$.

Now choose $(X,-Y) \in \mathcal{A}(N_1,N_2)$ with minimal separation set.
Note for each pair $(X,-Y) \in \mathcal{A}(N_1,N_2)$ we always have $|S(X,-Y)| > 0$,
for otherwise, the assumption $\underline{X} = \underline{Y}$ would give $X = -Y$, contradicting $Y \in \asym(\mathcal{W})$.
Certainly, $X$ and $-Y$ share the same sign with $N_1$ and $N_2$ at each non-separation coordinate of $N_1,N_2$ and so does $P = X + (-Y)$.
It also easy to see that the support of $P$ is a subset of the support of $X$, and hence a proper subset of the support of $N_1$ because $X \in I(N_1,N_2)$.
This establishes $P \in I(N_1,N_2)$.

It remains to show that $P \in \mathcal{P(W)}$.
Recall\vspace{0.2cm}

$\mathcal{P}(\mathcal{W})=\{X+(-Y):  X,Y \in \asym(\mathcal{W}),\underline{X}=\underline{Y} \text{ and } I(X,-Y) \cap \mathcal{W}=I(-X,Y) \cap \mathcal{W}=\emptyset\}.$\vspace{0.2cm}\\
Since $(X,-Y) \in \mathcal{A}(N_1,N_2)$, we only need to check the condition on the elimination sets.
Suppose for a contradiction that there exists $Z \in (I(X,-Y) \cup I(-X,Y)) \cap \mathcal{W}$ with $Z_e = 0$ for some $e \in S(X,-Y) = S(-X,Y)$.

In the case $Z \in I(X,-Y)$ we claim that the pair $Z \circ (-Y)$ and $-Y$ belong to $\mathcal{A}(N_1,N_2)$ and have a separation set smaller than the separation set of $X$ and $-Y$.

Notice that $Z \circ (-Y) \in \mathcal{W}$ is a direct consequence of $(A1)$.
Similarly as above, $Z \circ (-Y)$ share the same sign with $X$ and $-Y$ at non-separation coordinates of $X,-Y$ and in particular at non-separation coordinates of $N_1,N_2$, because the inequality $S(X,-Y) \subseteq S(N_1,N_2)$ holds.
By definition of elimination set we have that $\underline{Z} \subset \underline{X}$ as well as $\underline{X} \subset \underline{N_1}$,
implying that the support of $Z \circ (-Y)$ is a proper subset of the support of $N_1$.
We conclude $Z \circ (-Y) \in I(N_1,N_2) \cap \mathcal{W}$. The assumption $I(N_1,N_2) \cap \sym(\mathcal{W}) = \emptyset$ then establishes $Z \circ (-Y) \in \asym(\mathcal{W})$ and hence $(Z \circ (-Y),-Y) \in \mathcal{A}(N_1,N_2)$.

Furthermore, observe that $S(Z \circ (-Y),-Y) \subseteq S(X,-Y)$ since $Z \in I(X,-Y)$. 
By definition of elimination set we must have at least one coordinate $e$ in $S(X,-Y)$ at which $Z$ has sign $0$,
meaning that $(Z \circ (-Y))_{e} = (-Y)_{e}$ and verifying that the inclusion is proper. This however contradicts minimality.

The same argument also works with the pair $(X,(-Z) \circ X)$ if $Z \in I(-X,Y)$.
Summarising, we have $(I(X,-Y) \cup I(-X,Y)) \cap \mathcal{W} = \emptyset$, which yields $P \in \mathcal{P}(\mathcal{W})$ as desired.
That is, $[P,0] \in \mathcal{W}^\dagger$ fulfills all requirements of $(O4')$ on $V_1 = [N_1,0]$ and $V_2 = [N_2,0]$.\vspace{0.2cm}\\
Case 3.2 of $(O4')$: $N_1,N_2 \in \mathcal{P}(\mathcal{W})$.

Let $N_1 = U + (-U')$ for some $U,U' \in \asym(\mathcal{W})$.
The sign vector $V = (-N_2) \circ U$ then belongs to $\mathcal{W}$ by $(A3)$ and Lemma \ref{Zauber} says that $V \in \asym(\mathcal{W})$.
We use a construction analogous to that in Case 3.1 of $(O4')$ to verify $\mathcal{A}(U,-V) \neq \emptyset$.
Although $(A2)$ no longer guarantees $I_{e}(N_1,N_2) \cap \mathcal{W} \neq \emptyset$ (because $N_1,N_2 \in \mathcal{P}(\mathcal{W})$ implies $N_1,N_2 \notin \mathcal{W}$ ),
one can modify $N_1$ and $N_2$ to make use of Lemma \ref{Zauber}:
By applying $(A3)$ to $N_1,N_2 \in \mathcal{P}(\mathcal{W})$ and $U \in \mathcal{W}$, we infer
that $(\pm N_1) \circ U \in \mathcal{W}$ and $(\pm N_2) \circ U \in \mathcal{W}$.

Pick some $e \in S(N_1,N_2) = S(N_1 \circ U, N_2 \circ U)$.
Axiom $(A2)$ ensures the existence of\vspace{0.2cm}

$A_1 \in I_e(N_1 \circ U,N_2 \circ U) \cap \mathcal{W}$ \text{ } and \text{ } $A_2 \in I_e((-N_1) \circ U,(-N_2) \circ U) \cap \mathcal{W}$.\vspace{0.2cm}\\
Construct $B_1 = A_1 \circ A_2$ and $B_2 = A_2 \circ A_1$ as before.
Since $A_1$ and $A_2$ are members of their respective elimination sets, their supports must be contained in the support of $N_1 \circ U$, which is simply the support of $U$.
Lemma \ref{Zauber} thus ensures $B_1,B_2 \in \asym(\mathcal{W})$.
It remains to check whether $B_1$ and $-B_2$ belong to $I(U,-V)$.
It is readily seen that  $S(N_1,N_2) \subseteq S(N_1 \circ U,N_2 \circ (-U)) = S(U,-V)$.
Conversely, the non-separation set of $U$ and $-V$, i.e. $E - S(U,-V)$, must be contained in the non-separation set of $N_1$ and $N_2$.
The conclusion that $\mathcal{A}(U,-V) \neq \emptyset$ now follows exactly as above with $U$ and $-V$ playing the roles of $X$ and $-Y$.

Now choose $(X,-Y) \in \mathcal{A}(U,-V)$ with minimal separation set $S(X,-Y)$, and let $P = X + (-Y)$.
One obtains $P \in \mathcal{P}(\mathcal{W})$ with the same argument as in Case 3.1 of $(O4')$.
It remains to prove $P \in I(N_1,N_2)$, i.e. $P$ shares the same sign with $N_1$ and $N_2$ at each non-separation coordinate of $N_1$,$N_2$ and the support of $P$ is a proper subset of the support of $N_1$.

Recall that $X,-Y \in I(U,-V) = I(N_1 \circ U,N_2 \circ (-U))$. A non-separation coordinate $f$ of $N_1$ and $N_2$ belongs either to $\underline{N_1}$ or to $N_1^0$.
The case $f \in \underline{N_1}$ is immediately clear as $X,-Y \in I(N_1 \circ U,N_2 \circ (-U))$.
For $f \notin \underline{U}$ we have $P_f = 0$ because $X_f = Y_f =0$, and $(N_1)_f = 0$ because the support of $N_1$ is contained in the support of $U$.
The only remaining case is $f \in \underline{U} - \underline{N_1}$. By Lemma \ref{Zauber} we have $X_f = Y_f = U_f$, whence $P_f = 0 = (N_1)_f$.

Thus it is enough to show that support of $P$ is a proper subset of the support of $N_1$.
Indeed, there is some $e \in S(U,-V) \subseteq \underline{U}$ with $X_e = 0$ by definition of $I(U,-V)$.
Since $\underline{X} = \underline{Y}$, we get $P_e = 0$. However, $(N_1)_e$ can not be 0, for otherwise we would have $X_e = U_e \neq 0$ by Lemma \ref{Zauber} on X, which would contradict $e \notin \underline{X}$. 
\end{proof}

\section{The Flaw in the Proof by Karlander}

The flaw occurs in Case 3.2 of $(O4')$ of the proof of Lemma 3.1: an attempt was made to avoid the statements for $\mathcal{A}(U,-V) \neq \emptyset$ by simply requiring that the pair $(U,-V)$ is put into $\mathcal{A}(U,-V)$.
As demanded by the proof, the set $\mathcal{A}(U,-V)$ should contain those pairs $(X,-Y)$ with two properties, of which the first is $\underline{X} = \underline{Y} \subset \underline{U}$.
This is used to ensure that $X + (-Y) \in I(N_1,N_2)$ in the last step.
The second is that $X + (-Y)$ should be a member of $\mathcal{P(W)}$.
Indeed, in the remainder of the proof of Case 3.2, some pair $(X,-Y) \in \mathcal{A}(U,V)$ eventually provides the sign vector $P = X + (-Y)$ belonging to $\mathcal{P(W)}$.
Unfortunately, the pair $(U,-V)$ itself does not have either of these two properties.
The first fails trivially and the next lemma ensures that the second property fails as well.

\begin{Lem}\label{gegenbsp}
In the context of Case 3.2 of $(O4')$, the pair $(U,V)$ fails to witness that $U + (-V) \in \mathcal{P(W)}$.
\end{Lem}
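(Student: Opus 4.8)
The plan is to check that the pair $(U,V)$ fulfils every requirement in the definition of $\mathcal{P}(\mathcal{W})$ except the one on the elimination sets, and then to show that this last requirement is violated. From Case 3.2 we already have $U\in\asym(\mathcal{W})$, $V=(-N_2)\circ U\in\mathcal{W}$ by $(A3)$, and $V\in\asym(\mathcal{W})$ by Lemma \ref{Zauber}; I would add the observation $\underline{U}=\underline{V}$. Indeed, since $\underline{U}=\underline{U'}$ we get $\underline{N_2}=\underline{N_1}=\underline{U+(-U')}=\underline{U}-S(U,-U')\subseteq\underline{U}$, hence $\underline{V}=\underline{N_2}\cup\underline{U}=\underline{U}$. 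Thus the only condition still open is $I(U,-V)\cap\mathcal{W}=I(-U,V)\cap\mathcal{W}=\emptyset$, and it is enough to refute the first half by exhibiting a single element of $I(U,-V)\cap\mathcal{W}$.

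Instead of attacking $I(U,-V)$ directly, I would place a member of $B(U,-V)$ into $\mathcal{W}$, namely $V':=N_2\circ U$, which belongs to $\mathcal{W}$ by $(A3)$. Its support is $\underline{N_2}\cup\underline{U}=\underline{U}$, and going through the three kinds of coordinate (those outside $\underline{U}$, those in $\underline{U}-\underline{N_2}$, those in $\underline{N_2}$) shows that $V'$ agrees with $U$ at every coordinate not lying in $S(U,-V)$. By Remark \ref{Reformulate} this forces $V'\in\{U,-V\}\cup B(U,-V)$, so it remains to exclude the two endpoints. The equality $V'=-V$ would give $N_2\circ U=N_2\circ(-U)$, hence $\underline{U}\subseteq\underline{N_2}$ and so $\underline{N_2}=\underline{U}$; but $S(U,-U')\neq\emptyset$, for otherwise $U=-U'$ and then $N_1=U\in\mathcal{W}$, which is impossible because $\mathcal{P}(\mathcal{W})\cap\mathcal{W}=\emptyset$, so $\underline{N_2}=\underline{U}-S(U,-U')\subsetneq\underline{U}$, a contradiction. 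The equality $V'=U$ means $N_2\preceq U$; but reading off $N_1=U+(-U')$ (again with $\underline{U}=\underline{U'}$) one sees $N_1\preceq U$ as well, so both $N_1$ and $N_2$ equal the restriction $U|_{\underline{N_1}}=U|_{\underline{N_2}}$, forcing $N_1=N_2$ and contradicting the hypothesis $V_1\neq V_2$ underlying $(O4')$. Hence $V'\in B(U,-V)\cap\mathcal{W}$.

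To finish I would invoke Lemma \ref{B property} in its contrapositive form — its proof uses nothing about $\mathcal{W}$ beyond axiom $(A2)$, so it applies in the present setting — obtaining $I(U,-V)\cap\mathcal{W}\neq\emptyset$ from $B(U,-V)\cap\mathcal{W}\neq\emptyset$. This is precisely the failure we wanted, so $(U,V)$ cannot witness $U+(-V)\in\mathcal{P}(\mathcal{W})$. The one step I expect to demand actual care is the exclusion of $V'=U$: it depends on the structural fact that every representative $U+(-U')$ of a parallel vector satisfies $U+(-U')\preceq U$, which is exactly what makes the equality of supports $\underline{N_1}=\underline{N_2}$ collapse $N_1$ and $N_2$ into one sign vector.
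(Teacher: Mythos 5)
Your proof is correct and rests on the same key construction as the paper's: the member $N_2\circ U\in\mathcal{W}$ (which you call $V'$ and the paper calls $N_2'$) obtained via $(A3)$, combined with an application of $(A2)$. The only cosmetic difference is that you route the final step through the contrapositive of Lemma \ref{B property} after explicitly placing $V'$ in $B(U,-V)$ (and carefully ruling out $V'=U$ and $V'=-V$), whereas the paper directly verifies $I(U,N_2\circ U)\subseteq I(U,-V)$ — but that inclusion is precisely what the proof of Lemma \ref{B property} establishes, so the two arguments coincide; your observation that Lemma \ref{B property} only needs $(A2)$ and hence applies before $\mathcal{W}$ is known to be an affine oriented matroid is the right thing to check and is accurate.
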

\begin{proof}
First recall the set-up briefly: $N_1= U + (-U')$ and $N_2$ belong to $\mathcal{P(W)}$. The sign vector $V$ is defined as $(-N_2) \circ U$.

By definition of $\mathcal{P(W)}$, it suffices to show that $I(U,-V) \cap \mathcal{W} \neq \emptyset$.
Note that $N_1' =N_1 \circ U = U$ and $N_2' = N_2 \circ U$ belong to $\mathcal{W}$ by $(A3)$ on $N_2 \in \mathcal{P(W)}$ and $U \in \mathcal{W}$. We first claim $I(N_1',N_2') \subseteq I(U,-V)$.

$Z \in I(N_1',N_2')$ means that the support of $Z$ is a proper subset of the support of $N_1 \circ U$ (which is certainly equal to the support of $U$),
and that $Z$ shares the same sign with $N_1'$ and $N_2'$ at each non-separation coordinate of $N_1'$ and $N_2'$.
It is readily seen that $S(N_1',N_2') = S(N_1 \circ U, N_2 \circ U)$ is a subset of $S(N_1 \circ U, N_2 \circ (-U)) = S(U,-V)$,
or, put differently, that the non-separation set of $U$ and $-V$ is contained in the non-separation set of $N_1'$ and $N_2'$. In particular, $Z$ shares the same sign with $N_1'$ and $N_2'$ at each non-separation coordinate $f$ of $U$ and $-V$. At such a coordinate we necessarily have $(N_1')_f = (N_1)_f = U_f$, implying that $Z \in I(U,-V)$.

By applying $(A2)$ to the pair $N_1'$ and $N_2'$, one derives that $I(N_1',N_2') \cap \mathcal{W} \neq \emptyset$. Hence $I(U,-V) \cap \mathcal{W} \neq \emptyset$ as claimed.
\end{proof}

According to the construction of $\mathcal{A}(U,-V)$ in the original manuscript of Karlander, it would be possible to let the pair $(U,-V)$ play the role of $(X,-Y)$ in the proof of Case 3.2. As Lemma \ref{gegenbsp} demonstrates, there would be no chance of showing that $U + (-V)$ belongs to $\mathcal{P(W)}$.

\section{Generalization of $\mathcal{P}$}
Working with the parallel vector system $\mathcal{P}$ is somewhat involved:
a distinction of cases is complicated because one has to go through all three constrains in the definition of $\mathcal{P}$.
In this section we generalize the sign vector system $\mathcal{P}$ by relaxing the constrains "$X,Y \in \asym(\mathcal{W})$" and "$\underline{X}=\underline{Y}$" in the definition. As equal support is no longer required, we must extend the notion of $I(X,Y)$ accordingly.
\begin{Def}\label{extend}
	Let $E$ be a finite set. For $X,Y \in \{+,-,0\}^E$, we define\vspace{0.2cm}\\
	$\bullet$	the \emph{extended e-elimination set} of $X$ and $Y$ for some $e\in S(X,Y)$ by
	\[I'_e(X,Y) = \{V \in \{+,-,0\}^E : \underline{V} \subseteq (\underline{X} \cup \underline{Y}) - \{e\}, V_f=(X\circ Y)_f  \text{ for all } f \notin S(X,Y)\},\]
	$\bullet$	the \emph{extended elimination set} of $X$ and $Y$ by
	\[I'(X,Y) = \bigcup_{e \in S(X,Y)} I'_e(X,Y).\]
If $S(X,Y)= \emptyset$, then $I'_e(X,Y)=I'(X,Y)=\emptyset$ for every $e\in E$.

For $\mathcal{W} \subseteq \{+,-,0\}^E$ and $X,Y\in \mathcal{W}$, we define\vspace{0.2cm}\\
	$\bullet$	the vector system $\mathcal{Q(W)}$ by	
	\[\mathcal{Q(W)}=\{X + (-Y):X,Y\in \mathcal{W},I'(X,-Y) \cap \mathcal{W}=I'(-X,Y) \cap \mathcal{W}=\emptyset\}.\]
\end{Def}

The set $I'_e(X,Y)$ is adapted to strong elimination, which does not require equal support:
\begin{align*}
(SE)
&\text{ if } X,Y \in \mathcal{W} \text{ and } e \in S(X,Y) \text{ then there exists } Z \in \mathcal{O} \text{ such that}\\
&Z_e=0 \text{ and } Z_f = {(X \circ Y)}_f = {(Y \circ X)}_f \text{ for all } f \notin S(X,Y),
\end{align*}
This axiom is also used to characterize oriented matroids. Actually, the axiomsystem $(O1),(O2),(O3),(SE)$ for oriented matroid is well known.
\begin{Lem}\label{SEO4}
	Assuming $(O3)$, the axioms $(SE)$ and $(O4)$ are equivalent.
\end{Lem}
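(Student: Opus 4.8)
The plan is to notice that $(SE)$ is just $(O4)$ with the equal-support hypothesis removed, so that $(SE)\Rightarrow(O4)$ is immediate, and then to recover $(SE)$ in full generality from $(O4)$ by a single use of $(O3)$: one symmetrises the pair $(X,Y)$ to $(X\circ Y,\,Y\circ X)$, which has equal support yet the same separation set and the same ``target'' sign vector off that separation set.

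For the easy direction I would simply observe: if $\mathcal O$ satisfies $(SE)$ and $X,Y\in\mathcal O$ have $\underline X=\underline Y$ with $e\in S(X,Y)$, then the $Z$ furnished by $(SE)$ for this very pair already meets every requirement of $(O4)$.

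For $(O3)+(O4)\Rightarrow(SE)$, fix $X,Y\in\mathcal O$ and $e\in S(X,Y)$, and set $X'=X\circ Y$, $Y'=Y\circ X$. First I would record that $X',Y'\in\mathcal O$ by $(O3)$ and that $\underline{X'}=\underline{Y'}=\underline X\cup\underline Y$ by Remark 1.1, so $X'$ and $Y'$ have equal support. Next I would verify coordinatewise that $S(X',Y')=S(X,Y)$ and that $(X'\circ Y')_f=(Y'\circ X')_f=(X\circ Y)_f$ for every $f\notin S(X,Y)$: at $f\in S(X,Y)$ the entries $X_f,Y_f$ are nonzero and opposite, so $X'_f=X_f$ and $Y'_f=Y_f$ remain opposite; at $f\notin S(X,Y)$ either one of $X_f,Y_f$ vanishes or they agree, and in each case the composed entries collapse to the single value $(X\circ Y)_f$. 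Then, since $e\in S(X',Y')$ and $\underline{X'}=\underline{Y'}$, axiom $(O4)$ yields $Z\in\mathcal O$ with $Z_e=0$ and $Z_f=(X'\circ Y')_f=(Y'\circ X')_f$ for all $f\notin S(X',Y')$; rewriting via the identifications above gives $Z_e=0$ and $Z_f=(X\circ Y)_f=(Y\circ X)_f$ for all $f\notin S(X,Y)$, which is exactly the conclusion of $(SE)$.

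The main obstacle is essentially conceptual rather than technical: spotting that passing from $(X,Y)$ to $(X\circ Y,\,Y\circ X)$ is the right move, because it equalises the supports while leaving both the separation set and the composition off the separation set untouched. The only place demanding a little care is the coordinatewise check at $f\notin S(X,Y)$, where $X_f$ and $Y_f$ need not agree on being zero; everything else is routine bookkeeping.
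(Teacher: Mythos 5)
Your proof is correct and follows exactly the same route as the paper: pass from $(X,Y)$ to the equal-support pair $(X\circ Y,\,Y\circ X)$ via $(O3)$, observe that the separation set and the composed values off the separation set are unchanged, and apply $(O4)$ to this pair to obtain the $Z$ required by $(SE)$. No gaps.
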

\begin{proof}
	The forward implication is trivial. As for the backward implication let $X,Y\in \mathcal{W}$ and $e \in S(X,Y)$. By $(O3)$, the pair $X\circ Y$ and $Y \circ X$ are contained in $\mathcal{W}$ with the same support. Obviously, the separation set $S(X,Y)$ is equal to $S(X \circ Y,Y \circ X)$, hence there exists some $Z \in I_e(X\circ Y, Y\circ X) \cap \mathcal{W}$ by $(O4)$, such that $Z_e=0$ and $Z_f = {((X \circ Y)\circ (Y\circ X))}_f = {(X \circ Y)}_f$ for all $f \notin S(X \circ Y,Y \circ X)=S(X,Y)$. Certainly, this $Z$ belongs to $Z \in I_e'(X,Y) \cap \mathcal{W}$, witnessing $(SE)$.
\end{proof}
\begin{Bem}\cite{COM}\label{RSE}
	Note that $X\circ Y \in \mathcal{W}$ entails $(O3)$	 since
	\[X\circ Y = (X \circ -X) \circ Y = X\circ -(X \circ -Y)\in X\circ -\mathcal{W} \subseteq \mathcal{W}.\]
	Therefore, we may replace the axiom $(A1)$ in Theorem \ref{karlander} with
	\[(A1') \text{ if } X,Y \in \mathcal{W} \text{ then } X \circ (-Y) \in \mathcal{W}.\]
	As $(A2)$ only restates $(O4)$ using the notion of elimination set $I$, the axiom $(SE)$ holds in every system $\mathcal{W}$ satisfying $(A1)$ and $(A2)$. Furthermore, one may also rephrase $(SE)$ using the notion of extended elimination set $I'$:
	\[(A2') \text{ if } X,Y \in \mathcal{W} \text{ with } S(X,Y)\neq \emptyset \text{, then } I'_e(X,Y) \cap \mathcal{W} \neq \emptyset \text{ for all $e \in S(X,Y)$}.\]
\end{Bem}
As we have relaxed two constraints, working with $\mathcal{Q}$ is certainly easier than with $\mathcal{P}$. For example, one may rephrase the lemmas and propositions in section 2 and section 3 using $\mathcal{Q}$ instead of $\mathcal{P}$ (with according adjustment). Everything remains true but the number of cases to be considered decreases somewhat. The parallel vector system $\mathcal{P(W)}$ is contained in $\mathcal{Q(W)}$ in general. 

\begin{Kor}\label{subset}
	The inclusion $\mathcal{P(W)} \subseteq \mathcal{Q(W)}$ holds for every $\mathcal{W} \subseteq \{+,-,0\}^E$.
\end{Kor}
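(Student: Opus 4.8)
The plan is to show that every generator $P=X+(-Y)$ of $\mathcal{P}(\mathcal{W})$ is already a generator of $\mathcal{Q}(\mathcal{W})$; the point is that the extra slack that $I'$ allows over $I$ disappears as soon as the two arguments have equal support. So I would start from $P=X+(-Y)\in\mathcal{P}(\mathcal{W})$ with $X,Y\in\asym(\mathcal{W})$, $\underline{X}=\underline{Y}$, and $I(X,-Y)\cap\mathcal{W}=I(-X,Y)\cap\mathcal{W}=\emptyset$. Since $\asym(\mathcal{W})\subseteq\mathcal{W}$, the membership $X,Y\in\mathcal{W}$ required by the definition of $\mathcal{Q}(\mathcal{W})$ is free. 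I would also record at the outset that $X\neq -Y$ --- otherwise $-Y=X\in\mathcal{W}$ would contradict $Y\in\asym(\mathcal{W})$ --- so that, together with $\underline{X}=\underline{-Y}$, the separation set $S(X,-Y)=S(-X,Y)$ is nonempty and every elimination set in play is taken over a nonempty index set.

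The key step is to prove the two equalities $I'(X,-Y)=I(X,-Y)$ and $I'(-X,Y)=I(-X,Y)$. For a fixed $e\in S(X,-Y)$ I would compare $I'_e(X,-Y)$ with $I_e(X,-Y)$ condition by condition: the support requirement $\underline{V}\subseteq(\underline{X}\cup\underline{-Y})-\{e\}$ collapses to $\underline{V}\subseteq\underline{X}-\{e\}$ because $\underline{X}=\underline{-Y}$; and the sign requirement $V_f=(X\circ(-Y))_f$ for $f\notin S(X,-Y)$ collapses to $V_f=X_f$, since for such $f$ one has $(X\circ(-Y))_f=X_f$ (if $f\in\underline{X}=\underline{-Y}$ the two signs are nonzero and, as $f\notin S(X,-Y)$, equal; if $f\notin\underline{X}$ both are $0$). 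Taking the union over $e\in S(X,-Y)$ then gives the equalities, and with them $I'(X,-Y)\cap\mathcal{W}=I(X,-Y)\cap\mathcal{W}=\emptyset$ and likewise $I'(-X,Y)\cap\mathcal{W}=\emptyset$, so $P$ meets every condition defining $\mathcal{Q}(\mathcal{W})$ and hence $P\in\mathcal{Q}(\mathcal{W})$.

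There is no real obstacle here: the whole content is the coordinatewise bookkeeping in the key step --- splitting $E$ into the common support versus its complement, and inside the support into separating versus non-separating coordinates --- plus the cheap observation that $X\neq -Y$. No axiom on $\mathcal{W}$ is used, which is consistent with the corollary being asserted for an arbitrary $\mathcal{W}\subseteq\{+,-,0\}^E$. If desired, the same computation could be isolated once as the auxiliary fact ``$I'(X,Y)=I(X,Y)$ whenever $\underline{X}=\underline{Y}$ and $X\neq Y$'', after which the corollary follows in one line by applying it to the pairs $(X,-Y)$ and $(-X,Y)$.
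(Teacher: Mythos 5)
Your proof is correct and follows essentially the same route as the paper's: both reduce the claim to the coordinatewise observation that $I'_e(X,-Y)=I_e(X,-Y)$ (and hence $I'=I$) once $\underline{X}=\underline{-Y}$, from which the $\mathcal{Q}$-conditions collapse to the $\mathcal{P}$-conditions. Your extra remark that $X\neq -Y$ (so $S(X,-Y)\neq\emptyset$) is a sensible sanity check but adds nothing beyond what the paper leaves implicit.
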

\begin{proof}
	Recall the definition of $e$-elimination set $I_e(X,Y)$ for some $X,Y \in \mathcal{W} \subseteq \{+,-,0\}^{E}$ with $\underline{X} = \underline{Y}$:
	\[I_e(X,Y) = \{V \in \{+,-,0\}^E : \underline{V} \subseteq \underline{X} - \{e\} $ and $ V_f=X_f $ for all $ f \notin S(X,Y)\}.\]
	The equations $\underline{X} - \{e\} = (\underline{X} \cup \underline{Y})- \{e\}$ and $X_f = (X\circ Y)_f$ for all $ f \notin S(X,Y)$ hold obviously as $\underline{X} = \underline{Y}$.
	Thus we have $I_e(X,Y) = I'_e(X,Y)$ and consequently $I(X,Y) = I'(X,Y)$ for all $X$ and $Y$ with $\underline{X} = \underline{Y}$.
	Hence for every $X,Y \in \mathcal{W}$ with $P=X+(-Y)\in \mathcal{P(W)}$, the condition $I(X,-Y) \cap \mathcal{W}=I(-X,Y) \cap \mathcal{W}=\emptyset$ is equivalent to $I'(X,-Y) \cap \mathcal{W}=I'(-X,Y) \cap \mathcal{W}=\emptyset$, whence $P \in \mathcal{Q(W)}$. Therefore we have $\mathcal{P(W)} \subseteq \mathcal{Q(W)}.$
\end{proof}
Moreover, if $\mathcal{W}$ is an affine oriented matroid, then $\mathcal{Q(W)}$ is already known.
\begin{Lem}[Bowler]\label{nathan}
	Let $E$ be a finite set. If $\mathcal{W} \subseteq\{+,-,0\}^E$ is an affine oriented matroid, then
	\[\mathcal{Q(W)}=\mathcal{N(W)}.\]
\end{Lem}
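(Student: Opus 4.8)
The plan is to prove the two inclusions separately, leaning on the structural results already established. Since $\mathcal{W}$ is an affine oriented matroid, let $\mathcal{O}=\mathcal{W}^\dagger$ be its oriented matroid on $E\cup\{g\}$, so that $\mathcal{N(W)}=\{N:[N,0]\in\mathcal{O}\}$ by Lemma \ref{N property}. For the inclusion $\mathcal{N(W)}\subseteq\mathcal{Q(W)}$, I would first handle $\sym(\mathcal{W})$: for $V\in\sym(\mathcal{W})$ one has $V=V+(-V)$, and $I'(V,-V)=I(V,-V)$ since $\underline{V}=\underline{-V}$; the same argument as in the proof of $(O1)$ in Lemma 3.1 (using minimality of support among members of $\mathcal{W}$ conforming to $V$, or rather directly that $I(V,-V)\cap\mathcal{W}=\emptyset$ whenever $V$ has minimal support in its "fibre") needs to be adapted — actually the cleaner route is to recall that $\sym(\mathcal{W})\subseteq\mathcal{P(W)}$ fails, so instead I would show directly that for $N\in\mathcal{N(W)}$, setting $X=N\circ V$ and $Y=(-N)\circ V$ for a suitable $V\in\mathcal{W}$ exhibits $N$ as a member of $\mathcal{Q(W)}$. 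This mirrors part (3) of the proof of Proposition \ref{N zerlegung}, but now we no longer need $X,Y\in\asym(\mathcal{W})$ nor $\underline{X}=\underline{Y}$ as membership conditions for $\mathcal{Q}$ — we only need $X,Y\in\mathcal{W}$ and the two extended-elimination conditions. Choosing $V\in\mathcal{W}$ with $|\underline{V}-\underline{N}|$ minimal, one gets $X=N\circ V$, $Y=(-N)\circ V\in\mathcal{W}$ by definition of $\mathcal{N(W)}$, $N=X+(-Y)$, and then $I'(X,-Y)\cap\mathcal{W}=\emptyset$ follows from the minimality exactly as in Proposition \ref{N zerlegung}.(3): any $Z$ in that intersection has $\underline{Z}\subseteq(\underline{X}\cup\underline{Y})-\{e\}$ with $e\in S(X,-Y)=S(-X,Y)\subseteq\underline{N}^{\,c}$-part that forces $|\underline{Z}-\underline{N}|<|\underline{V}-\underline{N}|$ — here one uses $\underline{X}\cup\underline{Y}=\underline{V}\cup\underline{N}$, so $\underline{Z}-\underline{N}\subseteq\underline{V}-\underline{N}$, and dropping $e$ makes it proper.

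For the reverse inclusion $\mathcal{Q(W)}\subseteq\mathcal{N(W)}$, let $P=X+(-Y)\in\mathcal{Q(W)}$ with $X,Y\in\mathcal{W}$ and $I'(X,-Y)\cap\mathcal{W}=I'(-X,Y)\cap\mathcal{W}=\emptyset$; the goal is $[P,0]\in\mathcal{O}=\mathcal{W}^\dagger$. Since $[X,+],[-Y,-]\in\mathcal{O}$, axiom $(SE)$ in $\mathcal{O}$ (equivalent to $(O4)$ by Lemma \ref{SEO4}, which $\mathcal{O}$ satisfies) applied to the separating coordinate $g$ produces $Z'\in\mathcal{O}$ with $Z'_g=0$ and $Z'_f=([X,+]\circ[-Y,-])_f$ for all $f\notin S([X,+],[-Y,-])$. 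Write $Z'=[Z,0]$, so $Z\in\mathcal{N(W)}$ and $Z$ agrees with $X\circ(-Y)$ off $S(X,-Y)$, i.e. $Z$ shares signs with $X$ and $-Y$ at every non-separation coordinate. I would then argue, analogously to part (2) of Proposition \ref{N zerlegung}, that in fact $Z=P$: first, $Z\in I'(X,-Y)$ — it cannot be one of $X,-Y$ since $Z\in\mathcal{N(W)}$ forces $Z$ or $-Z\in\mathcal{W}$ to lie in $\sym(\mathcal{W})$ while $X,Y\in\asym$ is not assumed, so this step needs care; the correct argument is that $\underline{Z}\subseteq\underline{X}\cup\underline{Y}$ from the elimination, combined with the fact that if $\underline{Z}=\underline{X}\cup\underline{Y}$ (case analogous to $B$-membership) then applying $(SE)$ again or $(A2')$ would put a member of $I'(X,-Y)$ into $\mathcal{W}$, contradicting $I'(X,-Y)\cap\mathcal{W}=\emptyset$; hence $\underline{Z}\subsetneq\underline{X}\cup\underline{Y}$, so $Z\in I'(X,-Y)$ — but wait, $Z\notin\mathcal{W}$ here, it is in $\mathcal{N(W)}$, so no contradiction with $I'(X,-Y)\cap\mathcal{W}=\emptyset$ directly; the point is rather to pin down $\underline{Z}$. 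Then $P=X+(-Y)\preceq Z$, and the conformity finish (showing $Z\circ X=X$, $Z\circ(-Y)=-Y$, hence $Z=P$) proceeds by the Lemma \ref{B property} argument together with $I'(X,-Y)\cap\mathcal{W}=\emptyset$ (noting that here $B$ and $I'$ conditions must be invoked in their extended, unequal-support form, which is why Remark \ref{RSE} and $(A2')$ are cited above).

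The main obstacle I anticipate is the $\mathcal{Q(W)}\subseteq\mathcal{N(W)}$ direction, specifically identifying $Z=P$ when $X$ and $Y$ need not have equal support: the clean dichotomy "$Z\in\{X,-Y\}\,\dot\cup\,B(X,-Y)\,\dot\cup\,I(X,-Y)$" from Remark \ref{Reformulate} only holds under $\underline{X}=\underline{-Y}$, so for general $X,Y$ one must first replace $X,Y$ by $X\circ(-Y)$ and $(-Y)\circ X$ (which have common support $\underline{X}\cup\underline{Y}$, lie in $\mathcal{W}$ by $(A1)$, and give the same sum $P$ and the same extended elimination sets — this is exactly the reduction used in the proof of Lemma \ref{SEO4}), thereby reducing to the equal-support situation where Lemma \ref{B property}, Remark \ref{Reformulate}, and the Proposition \ref{N zerlegung}.(2) machinery apply verbatim with $I'$ replaced by $I$. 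Once that reduction is in place, both inclusions are essentially repackagings of Proposition \ref{N zerlegung} (which gives $\mathcal{N(W)}=\sym(\mathcal{W})\,\dot\cup\,\mathcal{P(W)}$) together with Corollary \ref{subset} ($\mathcal{P(W)}\subseteq\mathcal{Q(W)}$) and the observation that $\sym(\mathcal{W})\subseteq\mathcal{Q(W)}$ — indeed the whole lemma might be proved most economically as: $\mathcal{N(W)}=\sym(\mathcal{W})\cup\mathcal{P(W)}\subseteq\mathcal{Q(W)}$ by Corollary \ref{subset} and the $\sym$-case, while $\mathcal{Q(W)}\subseteq\mathcal{N(W)}$ needs the $(SE)$-plus-conformity argument above.
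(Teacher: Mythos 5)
Your plan is essentially sound, but in the direction $\mathcal{Q}(\mathcal{W}) \subseteq \mathcal{N}(\mathcal{W})$ it diverges from the paper's route. The paper never descends to $\mathcal{O} = \mathcal{W}^\dagger$ and $(SE)$ for this inclusion; it argues entirely inside $\mathcal{W}$. Since $\mathcal{Q}(\mathcal{W})$ is symmetric, it suffices to show $\mathcal{Q}(\mathcal{W}) \circ \mathcal{W} \subseteq \mathcal{W}$. If $Q = X + (-Y)$ has $S(X,-Y) = \emptyset$, then $Q = X \circ (-Y)$ and $(A1)$ closes the case. If $S(X,-Y) \neq \emptyset$, the paper replaces $X, Y$ by $X' = X \circ (-Y)$ and $Y' = Y \circ (-X)$ --- the very reduction you identify as the crux --- checks that $Q$, $S$, and $I$ are unchanged under this replacement, observes that $(A2)$ now forces $X', Y' \in \asym(\mathcal{W})$, concludes $Q = X' + (-Y') \in \mathcal{P}(\mathcal{W})$, and finishes with a single application of $(A3)$. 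Your route, applying $(SE)$ in $\mathcal{O}$ at $g$ and pinning down $Z = P$ via the Proposition~\ref{N zerlegung}.(2) machinery, is valid but amounts to rederiving $\mathcal{P}(\mathcal{W}) \subseteq \mathcal{N}(\mathcal{W})$ rather than citing it; once you have $X', Y'$ with equal support and the $I$-conditions met, $Q \in \mathcal{P}(\mathcal{W})$ is a short step and $(A3)$ does the rest without returning to $\mathcal{W}^\dagger$.

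For $\mathcal{N}(\mathcal{W}) \subseteq \mathcal{Q}(\mathcal{W})$, your closing ``most economical'' plan ($\mathcal{N}(\mathcal{W}) = \sym(\mathcal{W}) \cup \mathcal{P}(\mathcal{W})$, then $\mathcal{P}(\mathcal{W}) \subseteq \mathcal{Q}(\mathcal{W})$ by Corollary~\ref{subset}, and the $\sym$-case separately) is exactly the paper's argument. Do note the slip in your first attempt: ``$V = V + (-V)$'' is false --- that sum is the all-zero vector, since $S(V,-V) = \underline{V}$. The correct witness for $V \in \sym(\mathcal{W})$ is $X = V$, $Y = -V$, giving $X + (-Y) = V + V = V$ with $S(V,V) = \emptyset$, hence $I'(X,-Y) = I'(-X,Y) = \emptyset$ by definition. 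Your alternative, direct route --- choose $V \in \mathcal{W}$ with $|\underline{V} - \underline{N}|$ minimal, set $X = N \circ V$, $Y = (-N) \circ V$, run the Proposition~\ref{N zerlegung}.(3) minimality argument --- also works and handles both the $\sym$ and $\mathcal{P}$ parts of $\mathcal{N}(\mathcal{W})$ uniformly, which is a legitimate alternative the paper does not pursue.
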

\begin{proof}
	
%
	
	First recall that $\mathcal{N(W)} =\{N \in \{+,-,0\}^{E}: (\pm N) \circ \mathcal{W} \subseteq \mathcal{W}\}$. Obviously, the system $\mathcal{Q(W)}$ is symmetric, thus it is sufficient for the forward inclusion to show $\mathcal{Q(W)}\circ \mathcal{W}\subseteq \mathcal{W}$. By Theorem \ref{karlander}, the axioms $(A1),(A2),(A3)$ hold in $\mathcal{W}$, hence $(SE)$ is also fulfilled by Remark \ref{RSE}. In particular, if $I'(X,-Y) \cap \mathcal{W} = \emptyset$ holds, then either $I'(X,-Y) = \emptyset$ (meaning that the separation set $S(X,-Y)$ is empty), or $-Y \notin \mathcal{W}$ (for otherwise $(SE)$ forces a non-empty intersection). Let $Q = X + (-Y)\in \mathcal{Q(W)}$. If $S(X,-Y) = \emptyset$, we note that $Q=X+(-Y)=X\circ (-Y)$, whence $Q\circ \mathcal{W}\subseteq \mathcal{W}$ follows from $(A1)$ because
	\[Q\circ \mathcal{W} = (X+(-Y))\circ \mathcal{W}=X \circ (-Y) \circ \mathcal{W}=X \circ -(Y \circ -\mathcal{W})\subseteq X\circ -\mathcal{W}\subseteq \mathcal{W}.\]
	Hence we may assume that $S(X,-Y)\neq \emptyset$. For using $(A3)$, the same technique as in the proof of Lemma \ref{SEO4} is applied: We work alternatively with the pair $X\circ (-Y)$ and $Y\circ (-X)$ which share the same support $\underline{X} \cup \underline{Y}$. Note that $X+(-Y) = X\circ (-Y) + (-(Y\circ -X)),$ $S(X,-Y)=S(X\circ(-Y),-(Y\circ -X))$ and $I'(X,-Y)=I(X\circ (-Y),-(Y\circ -X))$ as well as $I'(-X,Y)=I((-X)\circ Y,Y\circ (-X))$ hold, as are easy to check by going through the individual coordinates. We conclude as before that $I(X\circ (-Y),-(Y\circ -X))\cap \mathcal{W}=I((-X)\circ Y,Y\circ (-X))\cap \mathcal{W}=\emptyset$ and hence $X\circ (-Y),Y\circ (-X)\in \asym(\mathcal{W})$. Therefore $Q=X+(-Y)= X\circ (-Y) + (-(Y\circ -X))$ belongs to $\mathcal{P(W)}$ and $Q\circ \mathcal{W}\subseteq\mathcal{W}$ follows directly from $(A3)$.
	
	The backward inclusion is trivial by Proposition \ref{N zerlegung} ($\mathcal{N(W)} = \mathcal{P(W)} \cup \sym (\mathcal{W})$): each $N\in \mathcal{P(W)}$ is also a member in $\mathcal{Q(W)}$ as $\mathcal{P(W)}\subseteq \mathcal{Q(W)}$; every $N\in \sym(\mathcal{W})$ can be expressed in $N=N + (-(-N))$, then the pair $X=N,Y=-N$ witnesses that $N=X+(-Y)\in \mathcal{Q(W)}$ since $I'(X,-Y)$ is empty.
\end{proof}

At last we may modify Theorem \ref{karlander} as follows:

\begin{Kor}\label{knauer}
	A set $\mathcal{W} \subseteq\{+,-,0\}^E$ is an affine oriented matroid if and only if $\mathcal{W}$ satisfies $(A1'),(A2')$, and
	\[(A3') \text{ } \mathcal{Q}(\mathcal{W}) \circ \mathcal{W} \subseteq \mathcal{W}.\]
\end{Kor}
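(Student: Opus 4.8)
The plan is to reduce the corollary to Theorem~\ref{karlander} by showing that the triple $(A1'),(A2'),(A3')$ may be exchanged for the triple $(A1),(A2),(A3)$. The two implications are slightly asymmetric: $(A3')\Rightarrow(A3)$ is available for every $\mathcal{W}$, whereas the reverse will only be at hand once $\mathcal{W}$ is already known to be an affine oriented matroid.

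Suppose first that $\mathcal{W}$ is an affine oriented matroid. By Theorem~\ref{karlander} it satisfies $(A1),(A2),(A3)$. Now $(A1')$ is a special case of $(A1)$. Next, by Remark~\ref{RSE} the strong elimination axiom $(SE)$ holds in any $\mathcal{W}$ satisfying $(A1)$ and $(A2)$, and $(A2')$ is precisely $(SE)$ restated through $I'$, so $(A2')$ holds. Finally, Lemma~\ref{nathan} yields $\mathcal{Q}(\mathcal{W})=\mathcal{N}(\mathcal{W})$, and every $N\in\mathcal{N}(\mathcal{W})$ satisfies $N\circ\mathcal{W}\subseteq\mathcal{W}$ by the definition of $\mathcal{N}$; therefore $\mathcal{Q}(\mathcal{W})\circ\mathcal{W}\subseteq\mathcal{W}$, which is $(A3')$.

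Conversely, suppose $\mathcal{W}$ satisfies $(A1'),(A2'),(A3')$. First I would upgrade $(A1')$ to $(A1)$: the identity $X\circ Y=X\circ-(X\circ-Y)$ recorded in Remark~\ref{RSE} shows that $X\circ Y\in\mathcal{W}$ whenever $X,Y\in\mathcal{W}$, so together with $(A1')$ itself one obtains $X\circ(\pm Y)\in\mathcal{W}$, that is $(A1)$; in particular $(O3)$ holds. Then $(A2')$ is $(SE)$, and since $(O3)$ holds, Lemma~\ref{SEO4} turns $(SE)$ into $(O4)$, whose reformulation in Remark~\ref{Reformulate} for pairs of equal support is exactly $(A2)$. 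For $(A3)$, Corollary~\ref{subset} gives $\mathcal{P}(\mathcal{W})\subseteq\mathcal{Q}(\mathcal{W})$, hence $\mathcal{P}(\mathcal{W})\circ\mathcal{W}\subseteq\mathcal{Q}(\mathcal{W})\circ\mathcal{W}\subseteq\mathcal{W}$ by $(A3')$. Thus $\mathcal{W}$ satisfies $(A1),(A2),(A3)$, and Theorem~\ref{karlander} declares it an affine oriented matroid.

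No genuine obstacle arises; the one point demanding care is to keep Lemma~\ref{nathan} confined to the first direction, where $\mathcal{W}$ is already an affine oriented matroid. The identity $\mathcal{Q}(\mathcal{W})=\mathcal{N}(\mathcal{W})$, and with it the implication $(A3)\Rightarrow(A3')$, is not available for arbitrary $\mathcal{W}$; only the inclusion $\mathcal{P}(\mathcal{W})\subseteq\mathcal{Q}(\mathcal{W})$ of Corollary~\ref{subset} holds in general, yielding just $(A3')\Rightarrow(A3)$.
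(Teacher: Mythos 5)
Your proposal is correct and follows essentially the same route as the paper: forward via Theorem~\ref{karlander}, Remark~\ref{RSE}, and Lemma~\ref{nathan}; backward via upgrading $(A1')$ to $(A1)$, noting $(A2')\Rightarrow(A2)$, and using Corollary~\ref{subset} for $(A3')\Rightarrow(A3)$, then invoking Theorem~\ref{karlander}. You are in fact a little more explicit than the paper about the $(A1')\Rightarrow(A1)$ upgrade and about keeping Lemma~\ref{nathan} confined to the direction where $\mathcal{W}$ is already an affine oriented matroid, which is a welcome clarification rather than a divergence.
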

\begin{proof}
	Since the axiom $(A2')$ trivially implies $(A2)$, and $(A3)$ is a direct consequence of $(A3')$ by Corollary \ref{subset}, thus $(A1),(A2)$ and $(A3)$ hold in any system $\mathcal{W}$ satisfying $(A1),(A2'),(A3')$. Therefore by Theorem \ref{karlander}, the "if" direction is done. For the "only if" direction let $\mathcal{W}$ be an affine oriented matroid. Combining the result of Theorem \ref{karlander} and Remark \ref{RSE}, we conclude that $\mathcal{W}$ satisfies $(A2')$. It only remains to verify $(A3')$, which is a straightforward consequence of $(A1)$ and $(A3)$ in view of Lemma \ref{nathan} and Proposition \ref{N zerlegung}, respectively: 
	\[\mathcal{Q}(\mathcal{W}) \circ \mathcal{W} \subseteq \mathcal{W}\Longleftrightarrow \mathcal{N(W)\circ W \subseteq W} \Longleftrightarrow \mathcal{\sym(W) \circ W \subseteq W \text{ and } P(W)\circ W \subseteq W}.\]

\end{proof}
\leftline{An alternative proof of Corollary \ref{knauer} has been communicated to us by Knauer \cite{kolja1}.}
\vspace{0.4cm}

The step from oriented matroids to affine oriented matroids could be iterated: just fix one nonzero coordinate and proceed. Since the conditional oriented matroids (COM) can be characterized by $(A1')$ and $(A2')$ \cite{COM}, the iterated process of fixing single coordinates certainly stays within the class of COMs. Is actually every COM obtainable from some oriented matroid in this way? And if so, is this oriented matroid uniquely determinded? For step 1, with affine oriented matroids, the answer is yes in both cases, but it is unclear what happens beyond that first step. Could one at least characterize the COMs that would arise in step 2?

\section*{Acknowledgement.}

We are particularly greatful to Hans-Jürgen Bandelt and Nathan Bowler (Hamburg) for their valuable suggestions and comments that greatly improved the manuscript and motivated the last section.

\end{document}